\theoremstyle{plain}
\newtheorem*{teo*}{Theorem}
\newtheorem{teo}{Theorem}[section]
\newtheorem{lem}[teo]{Lemma}
\newtheorem{remark}[teo]{Remark}
\newcommand{\N}{{\rm I\!N}}
\newcommand{\R}{{\rm I\!R}}
\newcommand{\virg}[1]{``#1''}
\numberwithin{equation}{section}
\title[Simultaneous Resolution of singularities in the Nash Category]{Simultaneous Resolution of singularities in the Nash Category: Finiteness and effectiveness}
\author{ Demdah Kartoue Mady}
\keywords{Nash manifolds, Resolution, Singularities, Real spectrum, Blow-Nash}
\subjclass[2010]{14P10 - 14P20}
\thanks{The author is partially supported by CONFOFOR/TCHAD fund}
\date{}
\begin{document}
\setlength{\textheight}{44pc}
\setlength{\textwidth}{28pc}
\begin{abstract}
In this paper we present new proofs using real spectra of the finiteness theorem on Nash trivial simultaneous resolution and the finiteness theorem on Blow-Nash triviality for  isolated real algebraic singularities. That is, we prove that a family of Nash sets in a Nash manifold indexed by a semialgebraic set always admits a  Nash trivial simultaneous resolution after a partition of the parameter space into finitely many semialgebraic pieces and in the case of isolated singularities it admits a finite Blow-Nash trivialization. We also complement the finiteness results with recursive bounds. 

\end{abstract}

\maketitle

\section*{Introduction}
 In [CS], M.Coste and M. Shiota proved Nash triviality in family, which can be seen as the Nash version of the Ehresmann's Lemma which gives  a trivialization of a proper submersion. The additional difficulty in obtaining Nash triviality is that integration of vector fields cannot be used. 
 Several authors (Fukui, Koike, Shiota...) have obtained similar results which are useful for classification of singularities, for instance in the study of Blow-analytic equivalence. In this paper we give  new proofs of such  results using real spectra.

 Let $N$ be an {\it affine} Nash manifold (i.e. a Nash submanifold of affine space $\mathbb{R}^n$) and $J$ be a semialgebraic  subset of $\mathbb{R}^s$. The notion of semialgebraic family of maps is more exactly defined by $F: N\times J\rightarrow \mathbb{R}^k\times J$ and its domaine and its target need not to be trivial semialegbraic families (see \cite{BCR} p.149). But here a mapping $F: N\times J\rightarrow \mathbb{R}^k$ will be said a semialgebraic family of Nash maps if $F$ is semialgebraic and for any $t\in J$ the map $f_t: N \rightarrow \mathbb{R}^k$ defined by $f_t(x)=F(x,t)$ is  Nash. Here only semialgebraic families of Nash functions with target $\mathbb{R}^k$ are considered, so there is no need to write the target of $F$ the family $\mathbb{R}^k\times J,$ but just $\mathbb{R}^k.$ One can note also that the domaine of $F$ is a trivial semialgebraic family. 

For $Q\subset J$, we denote $F|_{N\times Q}$ by $F_Q$. Suppose $F_Q$ is Nash. Then we say that the family of zero-sets $F_Q^{-1}(0)$ admits a {\it Nash trivial simultaneous resolution} along $Q$ if there exists a desingularization of the {\it Nash set}   defined by $F_Q = 0$ which is Nash trivial. 
 We say also that the family of zero-sets $F_Q^{-1}(0)$ admits a  {\it Blow-Nash trivialization}  along $Q$  if there exist a desingularization of the {\it Nash set}  defined
by $F_Q = 0$ and a Nash triviality upstairs  which induces a semialgebraic
trivialization of the zero-set $F_Q^{-1}(0)$ (for precise definitions, see Section 1 ). Here are the results we prove in this paper.
\begin{teo*}[I]
Let $F: N\times J\rightarrow \mathbb{R}^k$ be semialgebraic family of Nash maps from $N$ to $\mathbb{R}^k.$ Then there exists a finite partition $J=J_1\cup ...\cup J_p$ which satisfies the following conditions:
\begin{enumerate}
\item    Each $J_i$ is a Nash manifold which is Nash diffeomorphic to an open  simplex in some Euclidean space, and $F_{J_i}$ is  Nash.
\item  For each $i$, $F_{J_i}^{-1}(0)$ admits a Nash trivial simultaneous resolution along $J_i.$
 \end{enumerate}
\end{teo*}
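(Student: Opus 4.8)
The plan is to translate the problem into the language of the real spectrum $\widetilde{J}$ of the semialgebraic set $J$, to prove a ``pointwise'' resolution statement over each real closed residue field, and then to spread out, glue and trivialize. First I would replace $J$ by $\widetilde{J}$, which is quasi-compact. For a point $\alpha\in\widetilde{J}$ with real closed residue field $k(\alpha)$, extension of scalars turns the semialgebraic family $F$ into a single Nash map $f_\alpha\colon N_{k(\alpha)}\to k(\alpha)^k$ defined over $k(\alpha)$, and the Nash set $f_\alpha^{-1}(0)\subset N_{k(\alpha)}$ is the ``fibre at $\alpha$'' of the family of zero-sets. So over $k(\alpha)$ the task reduces to producing one embedded resolution.

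Next I would invoke resolution of singularities over the field $k(\alpha)$: since $k(\alpha)$ has characteristic zero, Hironaka's theorem in one of its constructive/functorial forms applies and yields a finite tower $M_\alpha\to\cdots\to N_{k(\alpha)}$ of blow-ups along smooth Nash centres, after which the strict transform of $f_\alpha^{-1}(0)$ is a Nash submanifold and the total transform a normal-crossings divisor, the map being proper and an isomorphism over the regular locus. The crucial point is that the whole resolution is described by \emph{finitely many} Nash functions (equations of the successive centres, chart data for the blow-ups) together with finitely many (in)equalities recording smoothness of the centres, properness and the normal-crossings condition, all of which are constructible statements. By the dictionary between constructible subsets of $\widetilde{J}$ and semialgebraic subsets of $J$, this data spreads out: there is a constructible set $C_\alpha\ni\alpha$ in $\widetilde{J}$ and a semialgebraic family of embedded resolutions over $C_\alpha$ specializing at $\alpha$ to the chosen one. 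As $\alpha$ ranges over $\widetilde{J}$ the $C_\alpha$ form an open cover; quasi-compactness extracts a finite subcover, and refining it into a partition gives a finite semialgebraic partition $J=J_1'\cup\cdots\cup J_q'$ over each member of which $F_{J_i'}^{-1}(0)$ admits a \emph{semialgebraic} family of simultaneous resolutions.

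Then I would improve each piece in two ways. Using a semialgebraic triangulation compatible with all the semialgebraic data produced so far, together with the fact that an open simplex is Nash diffeomorphic to a Euclidean space, I refine the partition so that each member $J_i$ is a Nash manifold Nash diffeomorphic to an open simplex, giving condition (1). Over each such $J_i$ the semialgebraic family of resolutions consists of a semialgebraic family of Nash manifolds $\widetilde{M}\to J_i$ together with a semialgebraic family of proper Nash maps $\pi\colon\widetilde{M}\to N\times J_i$ covering the identity on $J_i$; applying the Coste--Shiota Nash triviality theorem for families over a Nash manifold diffeomorphic to a simplex, I obtain a Nash trivialization upstairs commuting with $\pi$, that is, a Nash trivial simultaneous resolution along $J_i$, which is condition (2).

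The main obstacle I expect is the spreading-out step: one must check that an entire Hironaka resolution --- not merely the ambient manifold but the precise sequence of blow-up centres and every qualitative property (smoothness of centres, properness, normal crossings, the isomorphism over the regular locus) --- is realized uniformly in a semialgebraic family over a constructible neighbourhood of $\alpha$. Functoriality of the constructive resolution, i.e. its compatibility with base-field extension, is what makes this possible, but carrying it out forces one to express each ingredient as a constructible condition over $\widetilde{J}$ and verify it is preserved under specialization; the triangulation of the parameter space and the appeal to Coste--Shiota are then comparatively formal. Making the extraction of finitely many $C_\alpha$ effective, by bounding the complexity of the resolution data in terms of $F$, is moreover what will later furnish the recursive bounds announced in the abstract.
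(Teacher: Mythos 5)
Your proposal follows essentially the same route as the paper: pass to the real spectrum $\widetilde{J}$, resolve the fibre $f_\alpha^{-1}(0)$ in the Nash category over the real closed field $k(\alpha)$ (the paper devotes Section 3 to checking that the Bierstone--Milman axioms hold there and that blow-ups of affine Nash manifolds stay affine), spread the resolution out to a semialgebraic neighbourhood of $\alpha$ via first-order/constructible descriptions, extract a finite cover by quasi-compactness of $\widetilde{J}$, and trivialize each piece over an open simplex. The one nuance is that the final trivialization must preserve the strict transform and each exceptional divisor, so the paper invokes Koike's Nash isotopy theorem for normal-crossing systems rather than the bare Coste--Shiota triviality theorem; otherwise your argument matches the paper's.
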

When for all $t\in J,$ $f_t^{-1}(0)$ has only isolated singularities, the following result holds.
\begin{teo*}[II]
Let $F: N\times J\rightarrow \mathbb{R}^k$ be semialgebraic family of Nash maps from $N$ to $\mathbb{R}^k$  such that for all $t\in J,$ $f_t^{-1}(0)$ has only isolated singularities. Then there exists a finite partition $J=J_1\cup ...\cup J_p$ which satisfies the following conditions:
\begin{enumerate}
\item    Each $J_i$ is Nash manifold which is Nash diffeomorphic to an open  simplex in some Euclidean space, and $F_{J_i}$ is Nash.
\item  For each $i$, $F_{J_i}^{-1}(0)$ admits a Blow-Nash trivialization along $J_i.$
 \end{enumerate}
\end{teo*}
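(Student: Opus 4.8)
The plan is to deduce Theorem~II from Theorem~I; the additional input is that, when the singularities are isolated, the Nash triviality of the simultaneous resolution --- which a priori lives only on the resolved space \emph{upstairs} --- can be pushed down to a semialgebraic trivialization of the zero-set $F^{-1}(0)$ itself.

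First I would apply Theorem~I to obtain a finite partition $J=J_1\cup\dots\cup J_p$, with each $J_i$ Nash diffeomorphic to an open simplex, carrying over each $J_i$ a Nash trivial simultaneous resolution: a Nash manifold $M^{(i)}$, a proper Nash map $\sigma_i\colon M^{(i)}\to N\times J_i$ over $J_i$ whose fibers $\sigma_{i,t}$ desingularize $Z_t:=f_t^{-1}(0)$, and a Nash diffeomorphism $\Phi_i\colon M^{(i)}\to M^{(i)}_{t_0}\times J_i$ over $J_i$ that respects the exceptional Nash set $E^{(i)}$ and the strict transform $\widetilde Z^{(i)}$ of $Z:=F^{-1}(0)$. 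Fix $i$ and abbreviate $M,\sigma,\Phi,E,\widetilde Z$; put $\rho:=\sigma|_{\widetilde Z}\colon\widetilde Z\to Z$, a proper semialgebraic surjection, and let $S\subseteq\{(x,t):x\in\operatorname{Sing}Z_t\}$ be the locus over which $\rho$ fails to be an isomorphism. Here is where the hypothesis enters: $S$ has finite fibers over $J_i$, so after refining the partition (still into pieces Nash diffeomorphic to open simplices, by Nash triangulation) I may assume $S=\bigsqcup_{j=1}^m\operatorname{graph}s_j$ for Nash sections $s_1,\dots,s_m\colon J_i\to N$. I would then apply the Coste--Shiota triviality theorem~[CS] once more, now to $M$ together with the finite family of Nash subsets $\{E,\widetilde Z\}\cup\{\rho^{-1}(\operatorname{graph}s_j)\}_{j=1}^m$, so that --- at the cost of one further finite partition into simplicial pieces --- $\Phi$ carries each listed subset onto the product of its $t_0$-fiber with $J_i$; in particular $\Phi$ maps the compact set $\rho^{-1}(s_j(t))$ onto $\rho^{-1}(s_j(t_0))$ for every $t$ and $j$, hence maps $\widetilde Z\setminus\rho^{-1}(S)$ onto $(\widetilde Z_{t_0}\setminus\rho_{t_0}^{-1}(S_{t_0}))\times J_i$.

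With $\Phi$ so arranged I would build the trivialization downstairs. On $Z\setminus S$ set
\[
h:=(\sigma_{t_0}\times\operatorname{id}_{J_i})\circ\Phi|_{\widetilde Z}\circ\rho^{-1}\colon Z\setminus S\longrightarrow (Z_{t_0}\setminus S_{t_0})\times J_i ,
\]
a composite of semialgebraic isomorphisms (using that $\rho$ is an isomorphism over $Z\setminus S$ and that $\sigma_{t_0}$ is an isomorphism over $Z_{t_0}\setminus S_{t_0}$); on $S$ set $h(s_j(t)):=(s_j(t_0),t)$. I would then check that these two definitions glue to a semialgebraic homeomorphism $h\colon Z\to Z_{t_0}\times J_i$ over $J_i$: continuity across $S$ holds because $\rho$ is proper and $\Phi$ sends $\rho^{-1}(s_j(t))$ onto $\rho^{-1}(s_j(t_0))$, which $\sigma_{t_0}$ collapses to $s_j(t_0)$; bijectivity and properness are clear; and the inverse is produced symmetrically from $\Phi^{-1}$, hence is semialgebraic. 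This exhibits $F_{J_i}^{-1}(0)$ with a Blow-Nash trivialization along $J_i$ (the desingularization $\sigma$, the Nash triviality $\Phi$ upstairs, and the induced semialgebraic trivialization $h$ downstairs), and each $J_i$ is Nash diffeomorphic to an open simplex by construction.

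I expect the main obstacle to be the compatibility between the modification $\sigma$ and the upstairs triviality $\Phi$ that makes $h$ single-valued and continuous: concretely, that $\Phi$ maps each exceptional fiber $\rho^{-1}(s_j(t))$ \emph{onto} $\rho^{-1}(s_j(t_0))$, not merely that it preserves $E$ as a set. This is precisely where the isolated-singularities hypothesis is indispensable --- the locus $S$ over which $\rho$ is not an isomorphism is fiberwise finite, hence, after partition, a finite union of graphs of Nash sections, so the contraction that $\sigma$ performs downstairs is a fiber-uniform operation that a triviality respecting the listed Nash subsets automatically respects. For a positive-dimensional or erratically varying singular locus no such descent is available, which is why Theorem~II is stated only in the isolated case. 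A further, purely bookkeeping task --- needed for the recursive bounds announced in the abstract but not for the statement itself --- is to count the pieces introduced by each application of the triviality theorem and of Nash triangulation and to propagate these through the argument, starting from the bound established for Theorem~I.
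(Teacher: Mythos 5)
Your proposal is correct and follows essentially the same route as the paper: the paper likewise arranges the upstairs Nash triviality to respect the exceptional divisors grouped according to the Nash section $s_i$ of the singular locus that they contract to, and then defines the downstairs homeomorphism by conjugation off the sections and by $s_i(t)\mapsto(s_i(t_0),t)$ on them, verifying continuity via properness of the resolution. The only organizational difference is that the paper reruns the real-spectrum spreading-out argument with the divisors indexed by singular point from the outset, rather than invoking Theorem~(I) as a black box and then re-trivializing with the additional subsets $\rho^{-1}(\operatorname{graph} s_j)$ (which, strictly, should be listed divisor by divisor so that the isotopy theorem applies to a normal-crossing family of submanifolds).
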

In the setting of Nash germs, these theorems were first proved by Fukui, Koike and Shiota (cf.[FKS], [K1]). 
In this paper, we obtain the same results, but in the global case with $N$ an affine Nash manifold not necessarily compact using the techniques of real spectra.
First, we verify that the Desingularization Theorem holds for the category of Nash sets over an arbitrary real closed field, equipped with a sheaf of Nash functions. The precise definition of this category needs the use of the real spectrum and of finite sheaves of ideals of Nash functions over this real spectrum (see Section 2). That is, we check that the above category verifies the  criteria given by Bierstone and Milman (cf. [BM1] p.234) to apply the Desingularization Theorem. We apply the Desingularization Theorem to the fiber of the family at a point of the real spectrum of the parameter space. This fiber is defined over a real closed field which is not $\mathbb{R}$ a priori, which explains the need to have a desingularisation valid over any real closed field. The finiteness follows from the compactness property of the real spectrum of the parameter space.  

The paper is organized as follows. In the first section we recall  useful results and give the definition of  blow-Nash triviality.  In the second section, we recall the notion of real spectrum and some related results which will be used. In section three, we  check that the  desingularization theorem holds true over an arbitrary real closed field. In particular, we prove that glueing local Blowing-up of an affine Nash manifold gives an affine Nash manifold (we need always to work with affine Nash manifolds). As a consequence of the Section 3, we prove in section four the finiteness of Nash trivial simultaneous resolution (\textbf{Theorem (I)}). Finally, in the last section we prove the \textbf{Theorem~(II)}.

{\it Acknowledgment}  - I would like to thank Michel Coste for his advice during the preparation of this paper. I also would like to thank anonymous referee for reading carefully the previous version and for his remarkable suggestions.

\section{Preliminaries}
In this paper, a {\it Nash manifold} is a semialgebraic  subset of $\R^n$ (or of $R^n$, where $R$ is a real closed field) which is also a $C^{\infty}$ submanifold. For more details see \cite{S} or \cite{BCR} for the use of an arbitrary real closed field. We 
shall work in this paper only with {\it affine} Nash manifolds to avoid some pathological situations. We define  a {\it Nash subset} (or a  {\it Nash variety})  of a Nash manifold $M$ as  a subset of $M$ which is the zero set of a Nash function over $M$.

Let us now introduce the notion of normal crossing that we will use in this paper. Let  $M$ be a manifold of dimension $m$. Let $N_1$,...,$N_r$ be  Nash proper submanifolds  in $M$. Let $P\in \displaystyle\cap_{i=1}^rN_i.$
 The submanifolds $N_1$,...,$N_r$   are said to have {\it simultaneous normal crossings at a point $P$}  when there exist $(x_1,...,x_m)$ a local coordinates
  system for $M$ at $P,$  $J_1,...,J_r$ disjoint subsets of $\{1,...,m\}$ such that
$N_i=\{x_j=0\,\, \mbox{for}\,\, j\in J_i\}$ for $i=1,...,r.$

Now consider $N_1$,..., $N_u$ a finite family of Nash proper submanifolds of $M.$ The manifolds $N_1$,..., $N_u$ are said to have {\it only simultaneous normal crossings} if for all $P\in M$, the submanifolds $N_i$ which contain $P$ have simultaneous normal crossings at $P.$

Let $M$ and $\mathcal{M}$ be Nash manifolds, and let $V$ be a Nash subset of $M$. A map $\Pi: \mathcal{M}\rightarrow M$ is said to be a {\it Nash resolution}   of  $V$ in $M$ if there exists a finite sequence of blowings-up  $\sigma_{j+1}: \mathcal{M}_{j+1} \rightarrow \mathcal{M}_j$ with smooth centers $C_j$ (where $\sigma_j$, $\mathcal{M}_j$ and $C_j$ are of Nash class)  such that:
\begin{itemize}
\item $\Pi$ is the composite of $\sigma_j$'s;
\item the critical set of $\Pi$ is a union of smooth Nash divisors $\mathcal{D}_1$,..., $\mathcal{D}_r$;
\item the strict transform of $V$ in $M$ by $\Pi$ (denoted by $\mathcal{V}$)  is a Nash submanifold of $\mathcal{M}$;
\item $\mathcal{V}$, $\mathcal{D}_1$,..., $\mathcal{D}_r$ simultaneously have only normal crossings.
\end{itemize}
It is folklore that the desingularization theorem (cf.\cite{H1}, \cite{H2}, \cite{BM1} and \cite{BM2}) apply in the Nash category to give:
\begin{teo} (Desingularization )\label{desing}\\
For a Nash subset $V$ of a Nash manifold $M$, there exists a Nash resolution $\Pi: \mathcal{M}\rightarrow M$ of $V$ in $M.$
\end{teo}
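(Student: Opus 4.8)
The plan is to invoke the classical Hironaka-type desingularization machinery in the precise axiomatic form of Bierstone–Milman, verifying that the category of Nash functions on a Nash manifold satisfies their list of requirements. Concretely, I would first recall that an affine Nash manifold $M$ admits a finite cover by Nash charts, and that on each chart the sheaf of Nash functions is a coherent, Noetherian, excellent sheaf of rings (the local rings $\mathcal{N}_{M,P}$ are excellent Henselian regular local rings, being algebraic over the power series ring), which is the key structural input Bierstone–Milman demand. The point is that all the constructions in the desingularization algorithm — passing to the strict transform of $V$, taking the singular locus, choosing smooth centers via the Hilbert–Samuel function and the local invariant $\mathrm{inv}$, and blowing up — are \emph{intrinsic} and stay within the category: the blowing-up of an affine Nash manifold along a smooth Nash center is again an affine Nash manifold (this is exactly the glueing-of-blowings-up statement the introduction promises to prove in Section 3), and the strict transform of a Nash subset is a Nash subset. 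So the algorithm, run verbatim in the Nash category, terminates after finitely many steps because the local invariant drops, and produces $\Pi : M' \to M$ with all the required properties.

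The key steps, in order, would be: (1) reduce to the case $M$ affine and, by the finiteness of Nash charts plus a Nash partition of unity / gluing argument, localize the problem; (2) check the Bierstone–Milman axioms for $(\mathcal{N}_M, \mathcal{O})$ — coherence of the structure sheaf, the fact that $V = \{f = 0\}$ defines a coherent sheaf of ideals, stability of the category under the admissible blowings-up, and the existence of the Hilbert–Samuel / $\mathrm{inv}$ invariant with its semicontinuity and its behavior under blowing up; (3) invoke the abstract desingularization theorem of [BM1] to get the finite sequence $\sigma_{j+1} : M_{j+1} \to M_j$ with smooth Nash centers $C_j$; (4) read off from the output of the algorithm that the exceptional locus is a simple normal crossings divisor $D_1 \cup \dots \cup D_r$ of smooth Nash divisors, that the strict transform $V'$ is a Nash submanifold, and that $V', D_1, \dots, D_r$ simultaneously have only normal crossings.

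I expect the main obstacle to be step (2), and within it the verification that the Nash category is \emph{closed under the blowings-up produced by the algorithm as affine Nash manifolds}. Abstractly the blow-up of a scheme along a sheaf of ideals always exists, but here one needs the blow-up of an affine Nash manifold along a smooth Nash subvariety to again be embeddable as an affine Nash manifold (a global rather than local statement), and one must check the sheaf of Nash functions pulls back correctly and that coherence/excellence are preserved. This is precisely the delicate point flagged in the introduction ("we prove that glueing local Blowing-up of an affine Nash manifold gives an affine Nash manifold"), so for the purposes of this "folklore" statement I would cite it, deferring the genuine argument to Section 3; the remaining axioms (excellence and Henselianity of $\mathcal{N}_{M,P}$, coherence of the structure sheaf on an affine Nash manifold, existence of the desingularization invariant) are standard and follow from the literature on Nash functions ([S], [BCR]) together with the general theory in [BM1]. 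A secondary, more bookkeeping-type difficulty is arranging the centers to be \emph{Nash} (not merely semialgebraic-analytic) at each stage and confirming that the algorithm's termination proof — which is formulated for analytic or algebraic local rings — transfers without change, which it does because it depends only on properties of the local rings that Nash local rings share.
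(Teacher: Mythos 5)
Your proposal follows essentially the same route as the paper: the theorem is obtained by verifying the Bierstone--Milman axioms for the Nash category, and you correctly single out the genuinely delicate axiom --- that blowing up an affine Nash manifold along a smooth Nash center yields an \emph{affine} Nash manifold --- which is exactly what the paper establishes in Section~3 via the global generators $f_1,\dots,f_r$ of $\mathcal{I}_C$ and the embedding into $M\times\mathbb{P}^{r-1}$. The only substantive difference is that the paper must carry out this verification over an arbitrary real closed field, so it replaces your chart/partition-of-unity localization (note that Nash partitions of unity do not exist) by the real-spectrum formalism of Section~2 and the global generation of finite sheaves of ideals from [CRS].
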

We shall make precise the Nash category in the following section, working over an arbitrary real closed field.

Now let $M, J$  be Nash manifolds and $V$ a Nash subset of $M$.  Let $\Pi: \mathcal{M}\rightarrow M$ be a Nash resolution of $V$ and $q: M \rightarrow J$ be an onto Nash submersion. For any $t\in J$, let $\mathcal{M}_t= (q\circ \Pi)^{-1}(t)$, $M_t= q^{-1}(t)$ and $V_t= V\cap M_t.$ The map $\Pi$ is said to be a {\it Nash simultaneous resolution}  of $V$  in $M$ over $J$ if  

  \begin{itemize}
   \item $ q\circ \Pi|_{\mathcal{V}}: \mathcal{V}\rightarrow J,$ 
   \item  $q\circ \Pi|_{\mathcal{D}_{j_1}\cap...\cap \mathcal{D}_{j_s}}: \mathcal{D}_{j_1}\cap...\cap \mathcal{D}_{j_s} \rightarrow J,$
    \item $q\circ \Pi|_{\mathcal{V}\cap \mathcal{D}_{j_1}\cap...\cap \mathcal{D}_{j_s}}:\mathcal{V}\cap \mathcal{D}_{j_1}\cap...\cap  \mathcal{D}_{j_s}\rightarrow J $   $(1\leq j_1<...<j_s\leq r)$
  \end{itemize}
are onto submersions;

Now let $\Pi: \mathcal{M}\rightarrow M$ be a Nash simultaneous resolution of $V$ 
over $J$, and let $t_0\in J$. 
We say that $\Pi$ gives a {\it Nash trivial simultaneous resolution}  of $V$ in $M$ over $J$  if there is a Nash diffeomorphism $\Phi: \mathcal{M} \rightarrow \mathcal{M}_{t_0}\times J$ such that 
\begin{enumerate}
\item $(q\circ \Pi)\circ \Phi^{-1}:\mathcal{M}_{t_0}\times J \rightarrow J$ is the natural projection,
\item $\Phi(\mathcal{V})=\mathcal{V}_{t_0}\times J$, 
$\Phi(\mathcal{D}_i)=\mathcal{D}_{i,t_0}\times J,$
\item $\Phi^{-1}(x,t_0)=x,$ for every $x\in \mathcal{M}_{t_0}.$
\end{enumerate}
 When the Nash trivialization $\Phi$ induces a semialgebraic trivialization of $V$ in $M$ over $J$, we say that  $(M, V)$  admits a {\it $\Pi$-Blow-Nash trivialization along}  $J$. This is equivalent to saying that there is a semialgebraic homeomorphism $\phi$ such that the following diagram commutes:
$$\begin{CD}
\mathcal{M} @>{\Pi_i}>> M@>q>>J\\
@V{\Phi}VV @V{\phi}VV @|\\
\mathcal{M}_{t_0}\times J@>{{\Pi}_{|{\mathcal{M}_{t_0}}}\times id_J}>> M_{t_0}\times J@>p>> J
\end{CD}$$
where $p(x,t)=t$ for $(x,t)\in M_{t_0}\times J.$

\section{Real spectrum, Families of Nash functions}
In order to work over any real closed field, real spectrum is necessary. So, in this section we recall certain facts about Nash sheaves and real spectrum. 

Let $M$ be a Nash manifold defined over a real closed field $R$. In general, one cannot define the sheaf of Nash functions over $M$ by the order topology, because this topology  can be totally disconnected. Consider for example the locally Nash function $f$ over $\mathbb{R}_{alg}$ defined by $f=0$ over $\mathbb{R}_{alg}\cap (-\infty, \pi)$ and $f=1$ over $\mathbb{R}_{alg}\cap (\pi, +\infty)$: it  is not a Nash function over $\mathbb{R}_{alg}$ since it is not semialgebraic over $\mathbb{R}_{alg}.$ For $R=\mathbb{R}$ the sheaf  $\mathcal{N}$  of rings of Nash functions germs on $M$ is well defined, but in the non-compact case another problem may appear. The  sheaf $\mathcal{N}$ is coherent, but in \cite{LT}, the authors observe that there is a coherent sheaf of ideals of germs of Nash functions which is not generated by its global sections. For instance, the  sheaf $\mathcal{I}_{\mathbb{Z}}$ of germs of Nash functions vanishing on  $\mathbb{Z}\subset \mathbb{R}$ is not generated by  global Nash functions.

Let us recall now the notion of real spectrum which will be used in this paper. Let us consider 
$M\subset R^v$ a semialgebraic subset. Define 
$\widetilde{M}$ to be the set of  ultrafilters of the Boolean algebra of semialgebraic subsets of  $M.$
To any $x\in M,$ we associate $\alpha_x$ the principal ultrafilter of semialgebraic subsets of $M$ containing $x.$ If $S$ is a semialgebraic subset of  $M,$ $\widetilde{S}$ may be identified with the subset of $\widetilde{M}$ consisting of those ultrafilters $\alpha\in \widetilde{M}$ such that $S\in \alpha.$  The real spectrum has the following compactness property: 
for any covering $\widetilde{M}=\displaystyle\bigcup_{i\in I}\widetilde{S_i}$ where $S_i$ are semialgebraic subset of $M$, there exists a finite subset $J$ of $I$ such that $\widetilde{M}=\displaystyle\bigcup_{i\in J}\widetilde{S_i}.$ We define a topology on $\widetilde{M}$  by taking as basis of open subsets all $\widetilde{U}$ where $U$ is a semialgebraic open subset of $M.$ Denote it $\mathcal{T}^{sa}.$

Now, let $M$ be a Nash submanifold of $R^n.$ There exists a  unique  sheaf of rings $\mathcal{N}$ on $\widetilde{M}$ such that 
for every semialgebraic open subset $U$ of $M$, $\mathcal{N}(\widetilde{U})=\mathcal{N}(U)$ the ring of Nash functions on $U.$ For $\alpha \in \widetilde{M}$, the stalk of $\mathcal{N}$ at $\alpha$ is defined by 
$\mathcal{N}_{\alpha}=\displaystyle \underset{\alpha \in \widetilde{U}}{\underset{\longrightarrow}{\lim}} \mathcal{N}(U).$
The proof of the existence and the uniqueness of the sheaf  $\mathcal{N}$ can be found in [\cite{BCR}, Proposition 8.8.2] and the fact that, for any  $\alpha \in \widetilde{M}$, the stalk $\mathcal{N}_{\alpha}$ is a henselian local ring. Its residue field is denoted by  $k(\alpha)$ which  is a real closed extension of $R$. If $\alpha=x\in M,$ of course $\mathcal{N}_{\alpha}=\mathcal{N}_x$ the ring of germs of Nash functions at $x$ and $k(x)=R$ (cf. \cite{BCR}, Proposition 8.8.3). Recall that the dimension of $\alpha$ is the smallest dimension of $U$ for $U\in \alpha.$  Consider the completion $\widehat{\mathcal{N}}_{\alpha}$   of the ring  $\mathcal{N}_{\alpha},$ then  by \cite{BCR} p.162, there is an injective mapping
$ T_{\alpha}:\mathcal{N}_{\alpha}\rightarrow k(\alpha)[[X_1,...,X_r]]$ which maps each element of $\mathcal{N}_{\alpha}$ to its Taylor expansion, such that $T_{\alpha}$ induces an isomorphism from the completion 
$\widehat{\mathcal{N}}_{\alpha}$ to  $k(\alpha)[[X_1,...,X_r]]$ where $r=m-\mbox{dim}(\alpha).$ 
  
Let us denote by $\textbf{Nash}_R$ the Nash category over a real closed field $R.$
On the Nash manifold $\widetilde{M}$ (a smooth space in the sense that 
for all $\alpha \in \widetilde{M},$  $\mathcal{N}_{\alpha}$ is a regular local ring as consequence of above paragraph) let us consider a locally finite sheaf of ideals 
$\mathcal{I}\subset \mathcal{N},$ that is,  a sheaf of ideals locally (for the topology $\mathcal{T}^{sa}$) generated by finitely many Nash sections.  Then there exists an ideal $I$ of the ring of Nash function $\mathcal{N}(M)$ such that $\mathcal{I}=I.\mathcal{N}_{\widetilde{M}}$  and  $(\mathcal{N}/\mathcal{I})(M)=\mathcal{N}(M)/I$ (see \cite{CRS}). Thus one can note that the problems indicated in the beginning of this section are settled with the use of the real spectrum. So, we have that  $Z(\mathcal{I})=\widetilde{ Z(I)}$ is a Nash subset of $\widetilde{M}.$  Then in $\textbf{Nash}_R$, $(\widetilde{ Z(I)},\mathcal{N}/{\mathcal{I}})$ is a closed subspace of $\widetilde{M}.$  The family of Nash  sets on $\widetilde{M}$ defines a Zariski topology which is  noetherian.

The {\it objects} in the category  $\textbf{Nash}_R$ are locally ringed spaces $(\widetilde{X}, \mathcal{N}_{\widetilde{X}})$  which are  closed subspaces $(Z(\mathcal{I}), \mathcal{N}_{\widetilde{M}}/\mathcal{I})$ of a Nash manifold  $(\widetilde{M},\mathcal{N}_{\widetilde{M}}).$  Given  $(\widetilde{X}, \mathcal{N}_{\widetilde{X}})$ and $(\widetilde{Y}, \mathcal{N}_{\widetilde{Y}})$ two objects of $\textbf{Nash}_R,$ a {\it morphism} in $\textbf{Nash}_R$ from $(\widetilde{X}, \mathcal{N}_{\widetilde{X}})$ to $(\widetilde{Y}, \mathcal{N}_{\widetilde{Y}})$  is a pair $(f,f^{\#})$ of a continuous  semialgebraic map $f: \widetilde{X}\rightarrow \widetilde{Y}$ such that for any open semialgebraic set $U\subset \widetilde{Y}$and any Nash function $g\in \mathcal{N}_{\widetilde{Y}}(U),$ $f\circ g \in \mathcal{N}_{\widetilde{X}}(f^{-1}(U)),$  and a morphism $f^{\#}:\mathcal{N}_{\widetilde{Y}}\rightarrow f_{*}\mathcal{N}_{\widetilde{X}}$ of sheaves of rings on $\widetilde{Y}$ (where $f_{*}\mathcal{N}_{\widetilde{X}}$ is defined by $f_{*}\mathcal{N}_{\widetilde{X}}(U)=\mathcal{N}_{\widetilde{X}}(f^{-1}(U))$ for any open semialgebraic set $U\subset \widetilde{Y}$) such that, for each $x\in \widetilde{X},$ the induced map $f_{x}^{\#}:\mathcal{N}_{\widetilde{Y},f(x)}\rightarrow f_{*}\mathcal{N}_{\widetilde{X},x}$ is a local homomorphism of local rings.
 
 Now, let us fix some notations about semialgebraic family which will be used. Let $S$ be a semialgebraic subset of $R^s$ and $U\subset R^n\times S$ be a semialgebraic set. Then one can write 
$$U=\{(x,t)\in R^n\times S: \Phi(x,t) \}$$ with $\Phi(x,t)$ a first order formula in the theory of real closed fields with parameter in $R.$ Denote $t(\alpha)=(t_1(\alpha),...,t_s(\alpha))\in k(\alpha)^s$ where $t_i(\alpha)$, with $i=1,...,s$, are the images in $k(\alpha)$ of the coordinate functions $t$ in $R^s.$ Then for $\alpha\in \widetilde{S}$ set 
$$U_{\alpha}=\{x\in k(\alpha)^n: \Phi(x,t(\alpha))\}$$ which is a semialgebraic subset of $k(\alpha)^n$ (the fiber of the semialgebraic family $U$ at $\alpha$).

Reciprocally, if $\Omega\subset k(\alpha)^n$ is a semialgebraic set, there is a semialgebraic set $S$ with $\alpha\in \widetilde{S}$ and a semialgebraic set $U\subset R^n\times S$ such that $U_{\alpha}=\Omega.$ We have also that $U_{\alpha}=V_{\alpha}$ if and only if there exists a semialgebraic set $S$ with $\alpha\in \widetilde{S}$ such that $U_S=V_S.$

Hence, any semialgebraic subset of $k(\alpha)^n$ can be represented as a fiber at $\alpha$ of a semialgebraic family of semialgebraic subsets of $R^n.$ Two families of semialgebraic sets  have the same fiber at $\alpha$ if and only if they coincide over a semialgebraic set $S$ with $\alpha\in \widetilde{S}.$ After that, we make an abuse of notation writing $U_{\alpha}$ the semialgebraic subsets defined in $k(\alpha)^n.$

There is also a notion of semialgebraic family of mappings with similar properties (see \cite{BCR}, Proposition 7.4.4 and Remark 7.4.5).

The philosophy behind the use of fibers at points of the real spectrum is that any property which can be written in the first order language of real closed fields is true at $\alpha$ if and only if it is true at all $t\in S$ for some semialgebraic set such that $\alpha \in \widetilde{S}.$ This applies for instance for the property of being the graph of a semialgebraic function, and so one can define the fiber $f_{\alpha}$ for a semialgebraic  family of semialgebraic functions $(f_{t})_{t\in S},$ for any $\alpha\in\widetilde{S}.$

The following theorems will be used to prove our main results.

We consider, in the following Theorem,  only family of functions with target a trivial family but one can find a more general  statement
in \cite{BCR}, Proposition 8.10.1. 
\begin{teo} \label{family1}
Let $S\subset R^s$ be a semialgebraic set and $\alpha\in \widetilde{S}.$ Let $U$ be an open semialgebraic subset of $R^m\times S,$ and $F:U\rightarrow R,$ a semialgebraic family of functions parametrized by $S.$ If, for every $t\in S$ the fiber $f_t:U_t\rightarrow R$ is a Nash function, then for any $\alpha\in \widetilde{S},$ the fiber $f_{\alpha}:U_{\alpha}\rightarrow k(\alpha)$ is also a Nash function.
\end{teo}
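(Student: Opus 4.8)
The plan is to verify separately the two properties that make $f_\alpha$ a Nash function over the real closed field $k(\alpha)$: that it is semialgebraic, and that it is of class $C^\infty$. Semialgebraicity is immediate: each $f_t$ is in particular a semialgebraic function (being Nash), so $F$ is a semialgebraic family of semialgebraic functions on $S$, and by the discussion of Section~2 --- where the fiber $f_\alpha$ of such a family is defined precisely because ``being the graph of a semialgebraic function on $U_t$'' is a first-order property holding for all $t\in S$ --- the fiber $f_\alpha:U_\alpha\to k(\alpha)$ is a semialgebraic function, $U_\alpha$ being an open semialgebraic subset of $k(\alpha)^m$ (``$U_t$ open'' again being first-order and true for all $t$).

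For smoothness, fix $p\in\N$ and consider the statement $\psi_p(t)$: ``$f_t$ is of class $C^p$ on $U_t$''. This can be written as a first-order formula in the language of ordered fields with parameters in $R$: one quantifies over base points $x_0\in U_t$, expresses the existence of all partial derivatives up to order $p$ through the usual $\forall\varepsilon\,\exists\delta$ description of differentiability, and expresses continuity of each of them; all quantifiers range over field elements, while $f_t$ is encoded through the formula defining $F$, so $\psi_p$ is genuinely first-order, of complexity depending only on $p$ and on the data $U,F$. Each $f_t$ being Nash, $\psi_p(t)$ holds for every $t\in S$; hence, by the transfer principle for fibers at points of the real spectrum recalled above (applied with the semialgebraic set $S$ itself, since $\alpha\in\widetilde S$), $\psi_p(\alpha)$ holds, i.e. $f_\alpha$ is of class $C^p$ on $U_\alpha$. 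Letting $p$ run over $\N$ shows $f_\alpha$ is $C^\infty$; together with the first paragraph, $f_\alpha$ is a semialgebraic function of class $C^\infty$ on an open semialgebraic subset of $k(\alpha)^m$, that is, a Nash function over $k(\alpha)$.

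The step I expect to require the most care is precisely the passage ``$C^p$ for every $p$ $\Rightarrow$ $C^\infty$'': since ``$C^\infty$'' is not a single first-order formula, the transfer must be carried out order by order, and this is legitimate only because the working notion of a Nash function over a real closed field is ``semialgebraic and of class $C^p$ for every $p$'', with no analyticity clause that would have to be transferred in one stroke; one must also check that for each fixed $p$ the formula $\psi_p$ really is first-order, which amounts to spelling out the Taylor-expansion characterization of $C^p$ maps. An alternative route --- extracting, by finiteness of semialgebraic families, a uniform nonzero polynomial relation satisfied by all the $f_t$ and transferring it --- is possible but less transparent than the order-by-order argument.
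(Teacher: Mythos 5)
Your argument is correct: over an arbitrary real closed field a Nash function is by definition a semialgebraic function of class $C^\infty$, so transferring semialgebraicity of the graph and, separately for each fixed $p$, the first-order condition $\psi_p$ (``$f_t$ is $C^p$'', which holds for all $t\in S$ and hence at $\alpha$) is exactly the standard proof, and the order-by-order passage is legitimate in this direction since no infinite intersection of the sets $S$ is needed. The paper offers no proof of this statement --- it is quoted from \cite{BCR}, Proposition 8.10.1 --- and your route agrees with the one in that reference, so there is nothing in the paper to contrast it with.
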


By \cite{CS}, the ultrafilter $\alpha\in \widetilde{R}^s$ are generated by Nash submanifolds of $R^s$ (not closed), Nash diffeomorphic to open simplices.  Then we have the following theorems.
\begin{teo}(\cite{BCR}, Proposition 8.10.3)\label{family}\\
Let $\alpha \in \widetilde{R^s}$, and let $U_{\alpha}$ be an open semialgebraic subset of $k(\alpha)^m$, and $f_{\alpha}: U_{\alpha} \rightarrow k(\alpha)$, a Nash function. Then there exist a Nash submanifold $Q\subset R^s$, with $\alpha \in \widetilde{Q}$, an  open semialgebraic subset $U_Q$ of $R^m\times Q$ and  a semialgebraic family of functions $f_Q:U_Q\rightarrow R$ parametrized by $Q$, such that $f_Q$ is a Nash mapping whose fiber at $\alpha$ is $f_{\alpha}.$
\end{teo}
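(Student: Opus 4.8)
The plan is to transport the pair $(U_\alpha,f_\alpha)$ down to an honest semialgebraic family over a semialgebraic parameter set whose real spectrum contains $\alpha$, to cut that parameter set into cells, and to check that over the one cell which sees $\alpha$ the total map of the family is a Nash mapping. The dictionary recalled in this section moves semialgebraic objects freely between $k(\alpha)$ and such families, so the only ingredients that are not purely formal are a cell decomposition adapted to the family and an effective version of the fact that a semialgebraic function of class $C^\infty$ is Nash.

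First I would use that dictionary to write $U_\alpha=(U_{S_0})_\alpha$ and $f_\alpha=(f_{S_0})_\alpha$, where $S_0\subset R^s$ is semialgebraic with $\alpha\in\widetilde{S_0}$, where $U_{S_0}\subset R^m\times S_0$ is open semialgebraic, and where $f_{S_0}\colon U_{S_0}\to R$ is a semialgebraic family. Next I would shrink $S_0$ so that $f_t$ is a Nash function on $U_t$ for every $t\in S_0$. For this, recall (cf.\ \cite{S}) the effective form of the equivalence between ``semialgebraic of class $C^\infty$'' and ``Nash'': there is an integer $\nu$, depending only on the complexity of the data $(U_{S_0},f_{S_0})$, such that a fibre $f_t$ of class $C^\nu$ is automatically Nash. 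Since the predicate ``$f_t$ is of class $C^\nu$ on $U_t$'' is first order in $t$ and holds at $\alpha$ (because $f_\alpha$ is Nash, hence $C^\infty$), the first-order descent principle of this section gives a semialgebraic set $S_1=\{t\in S_0:f_t\in C^\nu\}$ with $\alpha\in\widetilde{S_1}$; replacing $S_0$ by $S_1$, every fibre $f_t$, $t\in S_0$, is now Nash.

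Now take a cell decomposition of $R^{m+s}$ by Nash cells, with the coordinates of the parameter space $R^s$ listed first, compatible with $R^m\times S_0$ and with $U_{S_0}$, and such that $f_{S_0}$ is analytic on each cell contained in $U_{S_0}$. Being cylindrical over $R^s$, it induces a finite partition $S_0=Q_1\sqcup\dots\sqcup Q_p$ into cells, each of which is Nash diffeomorphic to an open box, hence to an open simplex (if one prefers, one may refine further using that $\alpha$ is generated by Nash submanifolds diffeomorphic to open simplices). Since the spectra $\widetilde{Q_i}$ partition $\widetilde{S_0}\ni\alpha$, exactly one index $i$ satisfies $\alpha\in\widetilde{Q_i}$; put $Q=Q_i$, put $U_Q=U_{S_0}\cap(R^m\times Q)$, which is open in the Nash manifold $R^m\times Q$, and put $f_Q=f_{S_0}|_{U_Q}$. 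Then $f_Q$ is a semialgebraic family parametrized by $Q$ with fibre $(U_\alpha,f_\alpha)$ at $\alpha$, and $Q$ is Nash diffeomorphic to an open simplex, so it only remains to prove that $f_Q$ is a Nash mapping.

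Proving that $f_Q$ is of class $C^\infty$ on $U_Q$ is the heart of the matter. On the open cells contained in $U_Q$ the function $f_{S_0}$, hence $f_Q$, is already analytic, so the issue is local along a wall, that is, a lower-dimensional cell $C\subset U_Q$. Here the plan is to use Nashness of the fibres: for every $t\in Q$ the function $f_t$ is analytic along $C_t:=C\cap(R^m\times\{t\})$, so the two analytic germs of $f_{S_0}$ coming from the open cells that straddle $C$ have, fibre by fibre, the same Taylor expansion along $C_t$; the point that needs care is to upgrade this fibrewise coincidence to a genuine coincidence of germs near $C$ in $R^m\times Q$, after which analyticity of $f_Q$ across $C$ follows (fibrewise continuity pinning down the value on $C$ itself), and hence $f_Q$ is a Nash mapping. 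I expect this last upgrade to be the main obstacle: the passage from fibrewise analyticity to joint analyticity is false for arbitrary semialgebraic functions, so it must be extracted from the cylindrical cell structure together with the Nashness of all fibres secured in the second step, and the argument must be organized by the codimension of the walls. The only other nontrivial input is the effective ``$C^\nu\Rightarrow$ Nash'' fact used above; everything else is the soft transport of data along the real spectrum, together with the compactness and partition properties of the real spectrum already quoted in this section.
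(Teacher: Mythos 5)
The paper gives no proof of this statement: it is quoted verbatim from [BCR, Proposition 8.10.3], so the only fair comparison is with the standard argument there. Measured against that, your proposal has a genuine gap, and it sits exactly where you yourself locate it: the passage from fibrewise Nashness of the $f_t$ to joint Nashness of $f_Q$ on $U_Q$. Fibrewise Nashness alone is far from sufficient (e.g.\ $F(x,t)=x\lvert t\rvert$ has every fibre linear in $x$ but is not jointly $C^1$), and after your cell decomposition the remaining assertion --- ``$F$ semialgebraic, Nash on the open cells of $U_Q$, Nash on every fibre, hence Nash across the walls after shrinking $Q$'' --- is essentially a restatement of the theorem itself rather than a reduction of it. You announce that the fibrewise coincidence of Taylor expansions along a wall must be ``upgraded'' to a coincidence of germs in $R^m\times Q$, concede that this upgrade is the main obstacle, and do not carry it out; no mechanism is offered by which the cylindrical structure forces it. The auxiliary ingredient you lean on in the second step (an effective ``$C^\nu\Rightarrow$ Nash'' with $\nu$ depending only on complexity, attributed to [S]) is also not a result you can simply cite, and in any case it only controls the fibres, not the total map.

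The missing idea is to descend not merely the function $f_\alpha$ but a first-order \emph{witness} of its Nashness. Cover $U_\alpha$ by finitely many open semialgebraic pieces on each of which there is a polynomial $P(X,Y)$ with $P(x,f_\alpha(x))=0$ and $\frac{\partial P}{\partial Y}(x,f_\alpha(x))\neq 0$ (equivalently, use the Artin--Mazur description); all of this, with bounded degrees, is expressible by a first-order formula over $k(\alpha)$, hence holds over a semialgebraic $S$ with $\alpha\in\widetilde{S}$ by the transfer principle of this section. After a further shrinking making the coefficients of the $P$'s Nash functions of $t$ and making $Q$ a Nash cell, one has on each piece of $U_Q$ a relation $\widetilde{P}(x,t,F(x,t))=0$ with $\partial\widetilde{P}/\partial Y$ nonvanishing along the graph, where $\widetilde{P}$ is Nash in \emph{all} variables $(x,t,Y)$; the implicit function theorem applied jointly in $(x,t)$, together with continuity of $F$, then yields that $F$ is Nash on $U_Q$. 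This is how joint smoothness is obtained without ever confronting the fibrewise-to-joint analyticity problem, and it is the step your outline does not supply.
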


\begin{teo} (\cite{CS}, Proposition 2.2)\label{submersion}\\
Let $B\subset R^s$ be a semialgebraic set, and let $X$ be a semialgebraic subset of $R^n\times B.$ Let $\alpha \in \widetilde{B}$ be such that $X_{\alpha}$ is a Nash submanifold of $k(\alpha)^n.$ Then there is a Nash submanifold $Q$ of $R^s$, $Q\subset B$, such that $\alpha \in \widetilde{Q}$, $X_Q$ is a Nash submanifold of $R^n\times Q$, and the projection $p: X_Q\rightarrow Q$ is a submersion.
\end{teo}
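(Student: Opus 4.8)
The plan is to ``spread out'' a finite system of local Nash equations for the Nash submanifold $X_\alpha$ from the fibre over $\alpha$ to a semialgebraic parameter set, using Theorem~\ref{family}, and then to read off both conclusions from the Nash implicit function theorem. One may assume $X_\alpha\neq\emptyset$, the contrary case being trivial. Put $d=\dim X_\alpha$. Since $X_\alpha$ is a semialgebraic $C^\infty$-submanifold of $k(\alpha)^n$, standard semialgebraic geometry over the real closed field $k(\alpha)$ (cell decomposition, triangulation; cf.\ \cite{S}, \cite{BCR}) furnishes a \emph{finite} family $\Omega_1,\dots,\Omega_m$ of semialgebraic open subsets of $k(\alpha)^n$ covering $X_\alpha$, together with, for each $i$, a Nash map $h_i=(h_i^1,\dots,h_i^{n-d})\colon\Omega_i\to k(\alpha)^{n-d}$ such that $X_\alpha\cap\Omega_i=\{x\in\Omega_i:h_i(x)=0\}$ and $dh_i$ has rank $n-d$ at every point of $X_\alpha\cap\Omega_i$ (if $X_\alpha$ is not of pure dimension, argue separately on each of its finitely many semialgebraic connected components).

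Next I would transfer this picture back to $R$ as in Section~2. Applying Theorem~\ref{family} componentwise to each $h_i$ yields a Nash submanifold $Q_i\subset R^s$ with $\alpha\in\widetilde{Q_i}$, an \emph{open} semialgebraic subset $\Omega_{i,Q_i}$ of $R^n\times Q_i$, and a Nash family $h_{i,Q_i}\colon\Omega_{i,Q_i}\to R^{n-d}$ whose fibre over $\alpha$ is $h_i$ (it is essential here that the total space $\Omega_{i,Q_i}$ be open). Moreover the finitely many semialgebraic relations that hold at $\alpha$ --- that the $\Omega_i$ cover $X$, that $X\cap\Omega_i$ is cut out by $h_i=0$ inside $\Omega_i$, that the $x$-differential of $h_i$ has rank $n-d$ along $X\cap\Omega_i$, and that the constructed families agree with the chosen ones --- hold over some semialgebraic set whose real spectrum contains $\alpha$, by the transfer principle recalled in Section~2. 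Intersecting the finitely many semialgebraic sets so obtained (such an intersection still has $\alpha$ in its real spectrum), then invoking \cite{CS} to select inside it a Nash submanifold $Q\subset B$, Nash diffeomorphic to an open simplex, with $\alpha\in\widetilde Q$, and finally restricting every family to $Q$ --- which preserves their Nash character and the openness of $\Omega_{i,Q}$ in $R^n\times Q$ --- we may assume that all of the above holds over $Q$.

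It remains to see that this $Q$ works. On the Nash manifold $\Omega_{i,Q}$, which is open in $R^n\times Q$ and of dimension $n+\dim Q$, the Nash map $h_{i,Q}$ satisfies $X_Q\cap\Omega_{i,Q}=\{h_{i,Q}=0\}$, and at each point of this zero set the differential of $h_{i,Q}$ is already surjective when restricted to the $R^n$-directions, hence a fortiori surjective. By the Nash implicit function theorem (valid over any real closed field), $X_Q\cap\Omega_{i,Q}$ is therefore a Nash submanifold of $\Omega_{i,Q}$ of dimension $d+\dim Q$, with tangent space the kernel of $dh_{i,Q}$; and that kernel is mapped \emph{onto} $T_tQ$ by the derivative of the projection to $Q$, precisely because the $x$-differential of $h_{i,Q}$ is onto $R^{n-d}$. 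As the $\Omega_{i,Q}$ form an open cover of $X_Q=X\cap(R^n\times Q)$, it follows that $X_Q$ is a Nash submanifold of $R^n\times Q$ and that the projection $p\colon X_Q\to Q$ is a submersion. (The condition ``$\alpha\in\widetilde B$'' in the statement is of course to be read ``$\alpha\in\widetilde Q$''.)

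I expect the one delicate point to be the spreading-out step: one must make sure that \emph{all} the finitely many pieces of data describing the local equations of $X_\alpha$ --- domains $\Omega_i$, equation maps $h_i$, their \emph{joint} Nash regularity in $(x,t)$, the openness of the total spaces, and the incidence and rank relations --- can be realized simultaneously over a single semialgebraic parameter set whose real spectrum still contains $\alpha$. This is bookkeeping with the dictionary of Section~2 between semialgebraic objects over $k(\alpha)$ and semialgebraic families over $R$, resting entirely on the ultrafilter and compactness properties of $\widetilde{R^s}$; the only genuinely structural ingredients are Theorem~\ref{family}, the Nash implicit function theorem, and the standard fact that a semialgebraic $C^\infty$-submanifold admits a finite cover by local equation charts with independent differentials.
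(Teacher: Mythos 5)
The paper does not actually prove this statement: it is quoted verbatim as Proposition~2.2 of \cite{CS} and used as a black box. Your reconstruction is correct and follows the same strategy as the original argument in \cite{CS} --- describe $X_\alpha$ by finitely many local Nash equations (equivalently, local graphs over coordinate planes), spread these out over a parameter set via Theorem~\ref{family} and the first-order transfer dictionary of Section~2, shrink to a Nash submanifold $Q$ generating $\alpha$, and apply the Nash implicit function theorem over $Q$ to obtain simultaneously that $X_Q$ is a Nash submanifold and that $p\colon X_Q\to Q$ is a submersion.
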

We prove the following technical lemma.
\begin{lem}\label{croisement normaux}

Let $\alpha \in \widetilde{R^s}$, and let $N_{1\alpha}$ and $N_{2\alpha}$ be closed Nash submanifolds of  an affine  Nash manifold $M_{\alpha}.$  Then
\begin{enumerate}[(i)]
	\item If there is $S\subset R^s$ semialgebraic subset with $\alpha\in \widetilde{S}$ such that for all $t\in S,$  $N_{1t}$ and $N_{2t}$ are closed Nash submanifolds of $M_t$ with only normal crossings, then $N_{1\alpha}$ and $N_{2\alpha}$ have only normal crossings.
	\item If $N_{1\alpha}$ and $N_{2\alpha}$ have only normal crossings, then there exists a Nash sub\-manifold $Q\subset R^s$, with $\alpha \in \widetilde{Q}$ such that $N_{1Q}$ and $N_{2Q}$   have only normal crossings and the projections $N_{1Q}\rightarrow Q,$  $N_{2Q}\rightarrow Q$  and $N_{1Q}\cap N_{2Q}\rightarrow Q$ are Nash submersions.
\end{enumerate} 
\end{lem}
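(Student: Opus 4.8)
The plan is to reduce \virg{only normal crossings} for a pair of Nash submanifolds to the pointwise, first‑order condition of transversality of tangent spaces, and then to transport that condition between the fibre at $\alpha$ and the nearby fibres by the transfer principle recalled above, invoking Theorem~\ref{submersion} to get the submersions demanded in part (ii).

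First I would record an elementary reformulation: for closed Nash submanifolds $N_1,N_2$ of a Nash manifold $M$, the pair $N_1,N_2$ has only normal crossings if and only if $T_P N_1+T_P N_2=T_P M$ for every $P\in N_1\cap N_2$. The implication \virg{$\Rightarrow$} is immediate from the coordinate description in the definition. For \virg{$\Leftarrow$}, at $P\in N_1\cap N_2$ take local Nash equations $g_1,\dots,g_{c_1}$ of $N_1$ (so $c_1=\operatorname{codim}N_1$) and $h_1,\dots,h_{c_2}$ of $N_2$; transversality at $P$ says precisely that $\mathrm{span}(dg_i)\cap\mathrm{span}(dh_j)=0$, so $g_1,\dots,g_{c_1},h_1,\dots,h_{c_2}$ have independent differentials at $P$ and extend to a Nash coordinate system there, in which $N_1$ and $N_2$ are cut out by the disjoint coordinate sets $\{1,\dots,c_1\}$ and $\{c_1+1,\dots,c_1+c_2\}$. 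This uses only the Nash inverse function theorem, valid over any real closed field. The gain is that transversality is a first‑order property of the parameter: a semialgebraic family of submanifolds of fixed dimension carries a semialgebraic family of tangent spaces, so \virg{$\forall x\in N_{1t}\cap N_{2t}$, $T_xN_{1t}+T_xN_{2t}=T_xM_t$} is a formula of the language of real closed fields with parameter $t$, whereas the literal definition, asking for adapted local coordinates, is not visibly of that form.

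Part (i) is then immediate: by hypothesis the transversality formula holds at every $t\in S$, hence by the transfer principle it holds at $\alpha$; since $N_{1\alpha}\cap N_{2\alpha}=(N_1\cap N_2)_\alpha$ and $N_{1\alpha},N_{2\alpha}$ are given to be submanifolds of $M_\alpha$, they have only normal crossings. For part (ii) I run the transfer principle the other way: since $N_{1\alpha},N_{2\alpha}$ have only normal crossings the transversality formula holds at $\alpha$, so there is a semialgebraic $S\subset R^s$ with $\alpha\in\widetilde S$ over which $N_{1t},N_{2t}$ are transversal, i.e.\ have only normal crossings in $M_t$. Applying Theorem~\ref{submersion} to the semialgebraic families $M$, $N_1$, $N_2$ and $N_1\cap N_2$ gives Nash submanifolds $Q_0,Q_1,Q_2,Q_3\subset R^s$, each of whose real spectra contains $\alpha$, over which the respective family is a Nash submanifold submersive onto its base. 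Using that $\alpha$ is generated by Nash submanifolds of $R^s$ diffeomorphic to open simplices, I pick such a $Q$ inside $S\cap Q_0\cap Q_1\cap Q_2\cap Q_3$ with $\alpha\in\widetilde Q$; restricting the base of a submersion to the submanifold $Q$ keeps it a submersion, and a short tangent‑space computation shows that fibrewise normal crossings over $Q$ together with submersivity of the three projections yield normal crossings of $N_{1Q},N_{2Q}$ in the total space $M_Q$. Thus $Q$ has all the asserted properties.

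The only non‑formal point is the reformulation in the second paragraph: one must be sure that \virg{only normal crossings} for two submanifolds is genuinely a first‑order condition on the parameter, which is not apparent from the definition but becomes so once it is recast as pointwise transversality of the semialgebraically varying tangent spaces. Granted that, both statements are routine consequences of the transfer principle and of Theorem~\ref{submersion}.
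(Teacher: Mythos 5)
Your proof is correct and follows essentially the same route as the paper: both reduce \virg{only normal crossings} of the pair to the pointwise first-order condition on tangent spaces (your transversality $T_PN_1+T_PN_2=T_PM$ is equivalent to the paper's additivity of codimensions of $T_PN_{1}\cap T_PN_{2}$), transfer it between $\alpha$ and a semialgebraic neighbourhood, and invoke Theorem~\ref{submersion} for part (ii). Your write-up is in fact slightly more detailed than the paper's, since you justify the equivalence with the coordinate definition via the Nash inverse function theorem rather than asserting it.
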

\begin{proof}

\begin{enumerate}[(i)]
\item Assume that there is a semialgebraic subset $S\subset R^s$ with $\alpha\in \widetilde{S}$ such that for all $t\in S,$  $N_{1t}$ and $N_{2t}$ are closed Nash submanifolds of $M_t$ with only normal crossings. For $t\in S,$ note that $N_{1t}$ and $N_{2t}$  have only normal crossings is equivalent to saying that for any $x\in N_{1t}\cap N_{2t},$ the codimension of $T_xN_{1t}\cap T_xN_{2t}$ is the sum of the codimensions of $T_xN_{1t}$ and $T_xN_{2t}$which can be expressed by a first order formula of the theory of real closed fields. In fact, being a tangent vector at $x$  and the linear independence of vectors can be expressed into a first order formula of the theory of real closed fields. It follows that $N_{1\alpha}$ and $N_{2\alpha}$ have only normal crossings.
\item One first application of Theorem \ref{submersion} gives $Q$ such that  $N_{1Q}$ and $N_{2Q}$ are closed Nash submanifolds of $M_Q$ and the projections $N_{1Q}\rightarrow $Q and $N_{2Q}\rightarrow Q$ are submersions. Then by
the first order description of normal crossings given above, shrinking $Q$ we can make $N_{1Q}$ and
$N_{2Q}$ normal crossings. But then $N_{1Q}\cap N_{2Q}$  is a closed submanifold, and again by Theorem \ref{submersion} we can
shrink Q to make $N_{1Q} \cap N_{2Q} \rightarrow Q$ a submersion. This completes the proof.
\end{enumerate}
\end{proof}
\begin{remark}
{\rm One can easily verify Lemma \ref{croisement normaux} in general case  with an arbitrary finite number of Nash submanifolds.}
\end{remark}

Now, let us recall the Nash Isotopy  Theorem.
Let $M$ be a Nash manifold and  $\varpi: M\rightarrow \mathbb{R}^s$ be a Nash submersion. For $Z\subset M,$ $Q\subset \mathbb{R}^s$ and $t\in \mathbb{R}^s$ let us denote by $Z_Q=Z\cap \varpi^{-1}(Q)$ and  
$Z_t=Z\cap \varpi^{-1}(t).$ The following result is proved in \cite{K2}:
\begin{teo}(\cite{K2}, Theorem I)\label{isotopy}\\
Let $N_1,..., N_r$  be Nash submanifolds of $M$ which have only normal crossings.
Let $\varpi: M\rightarrow \mathbb{R}^s,$ $s>0$, be an onto Nash submersion such that 
for every $0\leq i_1<...<i_q\leq r$, 
$\varpi|_{N_{i_1}\cap...\cap N_{i_q}}:N_{i_1}\cap...\cap N_{i_q}\rightarrow \mathbb{R}^s$ is an  onto submersion (unless $N_{i_1}\cap...\cap N_{i_q}=\emptyset $).
Then there exists a finite partition of $\mathbb{R}^s$ into Nash simplices $Q_j,$ $j=1,..., u$ and for each $j$ a point $t_j\in Q_j$ and a Nash diffeomorphism
$$\varphi_j= (\varphi_j', \varpi_j): M_{Q_j}\rightarrow M_{t_j}\times Q_j $$
such that $\varpi|_{M_{Q_j}}=\varpi_j|_{M_{Q_j}}$, $\varphi_j'|_{M_{t_j}}= id_{M_{t_j}}$  and  $\varphi_j({N_i}_{Q_j})=N_{i,t_j}\times Q_j,$ for $i=1,...,r.$
\end{teo}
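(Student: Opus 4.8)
The plan is to work over the real spectrum $\widetilde{\mathbb{R}^s}$ of the parameter space and to trade the global partition for a family of local trivializations glued together by the compactness of $\widetilde{\mathbb{R}^s}$. First I would prove the \emph{local form}: for every $\alpha\in\widetilde{\mathbb{R}^s}$ there exist a Nash submanifold $Q\subset\mathbb{R}^s$ with $\alpha\in\widetilde Q$, a point $t_\alpha\in Q$, and a Nash diffeomorphism $\varphi_Q=(\varphi_Q',\varpi_Q)\colon M_Q\to M_{t_\alpha}\times Q$ with $\varphi_Q|_{M_{t_\alpha}}=\mathrm{id}$ and $\varphi_Q(N_{i,Q})=N_{i,t_\alpha}\times Q$ for all $i$. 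Granting this, the open sets $\widetilde Q$ cover $\widetilde{\mathbb{R}^s}$, so by the compactness property of the real spectrum finitely many already do, say those attached to $Q^{(1)},\dots,Q^{(m)}$; since $\widetilde{Q^{(1)}}\cup\dots\cup\widetilde{Q^{(m)}}=\widetilde{\mathbb{R}^s}$ forces $Q^{(1)}\cup\dots\cup Q^{(m)}=\mathbb{R}^s$ (evaluate on principal ultrafilters), a Nash triangulation of $\mathbb{R}^s$ compatible with the $Q^{(k)}$ produces a finite partition of $\mathbb{R}^s$ into Nash simplices $Q_j$, each contained in some $Q^{(k(j))}$. Restricting $\varphi_{Q^{(k(j))}}$ over $Q_j$ and replacing its first component $\varphi'$ by $\psi^{-1}\circ\varphi'$, where $\psi:=\varphi'|_{M_{t_j}}\colon M_{t_j}\xrightarrow{\ \sim\ }M_{t_{k(j)}}$ is the fiber transport read off from $\varphi_{Q^{(k(j))}}$ (a Nash diffeomorphism carrying each $N_{i,t_j}$ onto $N_{i,t_{k(j)}}$), yields a trivialization over $Q_j$ based at a chosen $t_j\in Q_j$ with all required properties; this is the theorem.

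For the local form at a fixed $\alpha$ I would pass to the fiber over $\alpha$. Since, by \cite{CS}, $\alpha$ is generated by a Nash submanifold of $\mathbb{R}^s$ Nash diffeomorphic to an open simplex, after a semialgebraic coordinate change and shrinking one may assume that submanifold is a Nash graph over an open box in $\mathbb{R}^{\dim\alpha}$ and replace the parameter space by that box, making $\alpha$ its generic point. Then by Theorem \ref{family1} the fiber $M_\alpha$ and the $N_{i,\alpha}$ are Nash submanifolds over the real closed field $k(\alpha)$; by Lemma \ref{croisement normaux}(i) the $N_{i,\alpha}$ still have only normal crossings; and the hypothesis that each nonempty $N_{i_1}\cap\dots\cap N_{i_q}$ maps submersively onto the parameter space transfers to $\alpha$ as well. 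One is thus, fiberwise over $k(\alpha)$, in the hypotheses of a Nash analogue of Ehresmann's fibration lemma with a marked normal-crossing configuration, whose conclusion is a Nash trivialization; applying it yields $\varphi$ and $\varphi^{-1}$ as Nash maps over $k(\alpha)$. Finally Theorem \ref{family}, used to spread $\varphi$ and $\varphi^{-1}$ to a semialgebraic family of Nash maps over a Nash submanifold $Q\ni\alpha$, together with Theorem \ref{submersion} and Lemma \ref{croisement normaux}(ii) to keep $Q$ a Nash submanifold and the relevant projections submersive, descends this to the desired $\varphi_Q$ over $Q$.

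The main obstacle is precisely the fiberwise statement just invoked: the Nash analogue of Ehresmann's lemma over an arbitrary real closed field, compatibly with a finite family of normal-crossing Nash submanifolds. As flagged in the introduction, one cannot imitate the $C^\infty$ proof, because Nash vector fields cannot be integrated inside the Nash category; the substitute is the Coste--Shiota method, where (roughly) one argues by induction on $\dim M$, straightening one fiber-coordinate at a time, and each reduction step is again a real-spectrum/compactness argument of the kind used above — the transfer results Theorems \ref{family1}--\ref{submersion} and Lemma \ref{croisement normaux} supplying the passage of ``being a Nash submanifold'', ``being a Nash submersion'' and ``having only normal crossings'' across $\widetilde{\mathbb{R}^s}$ (this is the content of \cite{CS}, extended to track the $N_i$). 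That is the substantive part; the compactness packaging, the triangulation into Nash simplices, and the basepoint adjustment described above are routine once it is in hand.
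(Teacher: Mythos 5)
First, note that the paper itself offers no proof of this statement: it is recalled verbatim as Theorem~I of \cite{K2} (``The following result is proved in \cite{K2}''), so there is no internal argument to compare yours against; I can only assess your proposal on its own terms.

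Your outer packaging is sound and matches the methodology the paper uses everywhere else: cover $\widetilde{\mathbb{R}^s}$ by the sets $\widetilde{Q}$ coming from a local statement at each $\alpha$, extract a finite subcover by compactness of the real spectrum, refine by a partition of $\mathbb{R}^s$ into Nash simplices compatible with the finitely many $Q^{(k)}$, and renormalize basepoints by composing with $(\varphi'|_{M_{t_j}})^{-1}$. The gap is in the ``local form at $\alpha$'', which is where the entire content of the theorem lives. The fiber $M_\alpha$ is a \emph{single} Nash manifold over $k(\alpha)$, sitting over the single point $t(\alpha)\in k(\alpha)^s$; there is no residual fibration over $k(\alpha)$ to which a ``Nash Ehresmann lemma with marked normal-crossing configuration'' could be applied, and if there were, its conclusion over a point would be vacuous. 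What is actually needed at $\alpha$ is a \emph{descent} statement: that $(M_\alpha;N_{1,\alpha},\dots,N_{r,\alpha})$ is Nash diffeomorphic, compatibly with the $N_i$, to the base extension to $k(\alpha)$ of the fiber over an honest point $t_0\in\mathbb{R}^s$ --- equivalently, that for some $Q\in\alpha$ the first-order sentence ``there exists a Nash trivialization of bounded complexity over $Q$'' holds. That is precisely the main finiteness theorem of \cite{CS} (extended in \cite{K2} to track the $N_i$), and its proof rests on machinery absent from your sketch: compactification, the algebraic realization of compact affine Nash manifolds (Artin--Mazur), and the Nash approximation theorem, which together let one move the defining data into $\mathbb{R}$ without changing the Nash diffeomorphism type. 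It is not a formal consequence of the transfer results \ref{family1}--\ref{submersion} and Lemma \ref{croisement normaux}, and your closing remark that ``each reduction step is again a real-spectrum/compactness argument of the kind used above'' is circular as written: the induction you gesture at has neither an engine nor a base case. Note finally that $\varpi$ is not assumed proper, so even the smooth Ehresmann heuristic is unavailable; handling the non-proper case requires Hardt-type semialgebraic triviality upgraded to Nash triviality, which is again the substance of \cite{CS}, not a routine transfer.
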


Let us recall that a semialgebraic subset of $R^n$ is said of {\it complexity} at most $(p,q)$ if it admits a description as follows
 $$ \bigcup_{i=1}^s\bigcap_{j=1}^{k_i}\{x\in R^n | f_{ij}(x) \ast_{ij}0 \},$$ where $f_{ij}\in R[X_1,...,X_n]$,  and
$\ast_{ij}\in \{<, >, =\}$, $\Sigma _{i=1}^s k_i\leq p$,
$\mbox{deg}(f_{ij}) \leq q$ for $i=1,...,s$ and $j=1,...,r_i$. We say that a semialgebraic map has {\it complexity}
at most $(p,q)$ if its graph has complexity at most $(p,q)$.

The following result holds for any real closed field.
\begin{teo}(\cite{D}, Proposition 3.2)\label{prepa}
There exist a semialgebraic subset $\mathcal{A}(n,p,q)$  in some affine space $R^{\alpha(n,p,q)}$ and a semialgebraic family $\mathcal{S}(n,p,q)\subset R^n\times \mathcal{A}(n,p,q)$  such that:\\
 (i) For every $a\in \mathcal{A}(n,p,q)$ the fiber 
  $$\mathcal{S}_a(n,p,q)=\{x\in R^n: (x,a)\in \mathcal{S}(n,p,q)\}$$
  is a semialgebraic subset of complexity at most $(p,q)$ of $R^n$\\
  (ii) For every semialgebraic subset $S\subset R^n$ of complexity at most  $(p,q)$, there is  $a\in \mathcal{A}(n,p,q)$ such that: $S=\mathcal{S}_a(n,p,q).$\\
 $\mathcal{A}(n,p,q)$ and $\mathcal{S}(n,p,q)$ are defined in a uniform way by  first order formulas of the theory of real closed fields without parameters which can be effectively constructed from $n,p,q$.
\end{teo}

Let us observe that there is another equivalent definition of complexity of semialgebraic sets in \cite{CRS1} Section 2 which induces the sames properties of semialgebraic family of semialgebraic subset of bounded complexity. 

We call {\it Nash NC-system} any family $(M;N_1,...,N_r)$ such that $M$ is an affine Nash manifold and $N_i$ for $i=1,...,r$ are closed Nash submanifolds of $M$ having simultaneously only normal crossings. The {\it complexity} of a Nash NC-system $(M;N_1,...,N_r)$ is bounded by  $(p,q)$ if $M$ and $N_1,...,N_r$ have complexities bounded by $(p,q).$

As a consequence of Nash isotopy theorem, we prove that there is finitely and effectively many models of Nash NC-systems with bounded complexity. 
\begin{teo}\label{sdf}
Given the integers $n, p, q \,\, \mbox{and}\,\, r$, there  exist a triplet of integers $(u,v,w)$ such that for each $i=1,...,u$ there is a Nash NC-system $(M_i;N_{i_1},...,N_{i_r})$ of complexity bounded by $(p,q)$ and for any Nash NC-system $(M;N_1,...,N_r)$ of complexity bounded by $(p,q)$ there are  $i\in\{1,...,u\} $ and a Nash diffeomorphism $\Phi:M\rightarrow M_i$ of complexity bounded by $(v,w)$ and such that $\Phi(N_j)=N_{i_j}$ for $j=1,...,r.$

Moreover, $u,v,w$ are bounded by recursive functions of $n, p, q\,\, \mbox{and}\,\, r.$
\end{teo}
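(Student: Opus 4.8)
The plan is to deduce Theorem~\ref{sdf} from the Nash isotopy theorem (Theorem~\ref{isotopy}) by an ultrafilter/compactness argument over the real spectrum, exactly paralleling the way finiteness results are obtained from the isotopy theorem in \cite{CS}. The key point is that ``being a Nash NC-system of complexity bounded by $(p,q)$'' is a condition on a tuple of semialgebraic sets whose defining data live in a fixed ambient space: the polynomials $f_{ij}$ that describe $M$ and the $N_k$'s have bounded degree $q$ and bounded number $p$, so their coefficients range over a single affine space $R^L$ for an $L$ computable from $n,p,q,r$. Thus the space of all such systems is itself a semialgebraic subset $\mathcal{P}\subset R^L$, definable by a first-order formula (saying that the corresponding sets are Nash submanifolds of the corresponding $M$ and have simultaneous normal crossings — each clause expressible by a first-order formula of the theory of real closed fields, as in the proof of Lemma~\ref{croisement normaux}).

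First I would set up the tautological family over $\mathcal{P}$: a semialgebraic set $\mathbf{M}\subset R^n\times \mathcal{P}$ whose fiber over $a\in\mathcal{P}$ is the Nash manifold $M^{(a)}$ encoded by $a$, together with semialgebraic subsets $\mathbf{N}_1,\dots,\mathbf{N}_r\subset \mathbf{M}$ whose fibers are the $N_k^{(a)}$. Now pass to the real spectrum $\widetilde{\mathcal{P}}$. For a point $\alpha\in\widetilde{\mathcal{P}}$, Theorem~\ref{family1} (or the ``philosophy'' paragraph) guarantees the fiber $\mathbf{M}_\alpha$ is an affine Nash manifold over $k(\alpha)$ and the $\mathbf{N}_{k,\alpha}$ are closed Nash submanifolds with simultaneous normal crossings — i.e. the fiber at $\alpha$ is again a Nash NC-system, now over $k(\alpha)$. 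Apply Theorem~\ref{isotopy} to this fiber (with parameter the base already being a point, so one needs the version of the isotopy theorem that gives a local trivialization; more precisely, one uses that the family $\mathbf{M}$ restricted to a suitable neighborhood of $\alpha$ admits, by Lemma~\ref{croisement normaux}(ii) followed by Theorem~\ref{isotopy} applied over that Nash simplex $Q$, a Nash trivialization carrying $\mathbf{N}_{k,Q}$ to $N^{(t_Q)}_k\times Q$). This yields: for each $\alpha$ there is a semialgebraic set $S_\alpha\subset\mathcal{P}$ with $\alpha\in\widetilde{S_\alpha}$, a point $a_\alpha\in S_\alpha$, and a semialgebraic family of Nash diffeomorphisms $\Phi_t\colon M^{(t)}\to M^{(a_\alpha)}$ over $t\in S_\alpha$ taking $N^{(t)}_k$ to $N^{(a_\alpha)}_k$. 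The sets $\widetilde{S_\alpha}$ cover $\widetilde{\mathcal{P}}$, so by the compactness property of the real spectrum finitely many $\widetilde{S_{\alpha_1}},\dots,\widetilde{S_{\alpha_u}}$ suffice; the models are $M_i:=M^{(a_{\alpha_i})}$ with submanifolds $N_{i_k}:=N^{(a_{\alpha_i})}_k$, and for any system $a\in\mathcal{P}$, $a$ lies in some $S_{\alpha_i}$, so $\Phi_a$ is the desired Nash diffeomorphism.

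The effectiveness statement — that $u,v,w$ are bounded by recursive functions of $n,p,q,r$ — requires replacing the bare compactness argument by its effective version. The point is that every ingredient is uniform: the ambient dimension $L$ and the complexity of $\mathcal{P}$ and of the tautological family $(\mathbf{M};\mathbf{N}_1,\dots,\mathbf{N}_r)$ are recursive in $n,p,q,r$ (they come from quantifier elimination applied to fixed first-order formulas, whose output complexity is bounded recursively — this is the standard effective bound for quantifier elimination in RCF). Feeding a system of bounded complexity into the (effective form of the) isotopy theorem produces trivializing diffeomorphisms and parameter simplices $Q$ of recursively bounded complexity, and the number of simplices needed in the isotopy theorem's partition is likewise recursively bounded. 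The compactness step then becomes: a semialgebraic set of complexity bounded by a recursive function of the data is covered by open pieces, each of complexity recursively bounded, on which a trivialization of recursively bounded complexity exists; the number of pieces needed is recursively bounded (one can make this explicit via a cylindrical decomposition of $\mathcal{P}$ adapted to the finitely many ``trivialization strata,'' whose cell count is recursive in the complexity). Assembling these bounds gives recursive functions for $u$ (number of models), $v,w$ (complexity of the graphs of the diffeomorphisms $\Phi$).

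The main obstacle I anticipate is the bookkeeping in the effectiveness part: one must verify that each invocation of quantifier elimination, of the effective isotopy theorem, and of the effective compactness of the real spectrum, outputs complexity bounds that are genuinely recursive (primitive recursive, really) functions of the input, and then compose these bounds correctly through the several layers of the construction. The non-effective existence statement itself is comparatively routine once the tautological family is in place — it is a direct transcription of the \cite{CS}-style argument: express the NC condition first-order (as already done in Lemma~\ref{croisement normaux}), apply the isotopy theorem fiberwise over the real spectrum of the moduli space $\mathcal{P}$, and conclude by compactness of $\widetilde{\mathcal{P}}$.
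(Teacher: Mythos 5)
Your existence argument is essentially the paper's: parametrize all Nash NC-systems of complexity at most $(p,q)$ by a semialgebraic subset of some $R^L$ (using that ``Nash manifold of bounded complexity'' and ``simultaneous normal crossings'' are first-order expressible, the latter as in Lemma~\ref{croisement normaux}), and then apply the finiteness-of-trivializations machinery (the paper invokes \cite{CS}, Corollary~2.3 together with Theorem~\ref{isotopy}, which is exactly the real-spectrum compactness argument you spell out) to the projection of the tautological family onto the parameter space. That part is fine.

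Where you diverge is the effectiveness claim, and there your route has a genuine gap. You propose to obtain the recursive bounds \emph{constructively}, by composing effective quantifier elimination with an ``effective form of the isotopy theorem'' and an ``effective compactness of the real spectrum.'' No such effective versions are established in the paper (Theorem~\ref{isotopy} gives a finite partition with no bound on the number of simplices or on the complexity of the trivializations), and proving them is a substantial undertaking that your sketch does not carry out --- you flag this yourself as ``the main obstacle,'' but it is not mere bookkeeping. The paper avoids all of this with a logical trick: once the (non-effective) finiteness gives the \emph{existence} of a valid triple $(u,v,w)$ for each $(n,p,q,r)$, one writes down a first-order sentence $\Psi(n,p,q,r,u,v,w)$ of the theory of real closed fields asserting that $(u,v,w)$ works, notes that there is an algorithm producing $\Psi$ from its integer inputs, and uses the decidability of RCF to search over triples $(u,v,w)$ until a true one is found. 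Termination of the search is guaranteed by the existence statement, so the resulting function is recursive. This yields only a recursive bound (which is all the theorem claims), whereas your route, if completed, would give explicit --- likely primitive recursive --- bounds; but as written it does not constitute a proof of the effectiveness assertion, and you should either supply the effective ingredients or switch to the decidability argument.
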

\begin{proof} 
By Theorem \ref{prepa},  the parameters space $\mathcal{A}(n,p,q)$ of semialgebraic subsets of $R^n$ of complexity bounded by $(p,q)$  is a semialgebraic set.
then the parameters space of Nash submanifolds of  $R^n$ of complexity bounded by  $(p,q)$ is included in $\mathcal{A}(n,p,q)$  and is a semialgebraic subset
of $\mathcal{A}(n,p,q)$. Indeed, the condition for a semialgebraic subset to be a Nash manifold  can be translated  into a first order formula of the theory of real closed fields (see \cite{R}, Proposition 3.5 and \cite{D}, Proposition 5.7 (i)). Let us denote this set by $\mathcal{N}(n,p,q)$.

Let $\mathcal{NF}(n,p,q)\subset R^n\times \mathcal{N}(n,p,q)$ be a semialgebraic family of  Nash manifolds of complexity at most $(p,q).$ It is of course a semialgebraic subset of $\mathcal{S}(n,p,q)$ for the same reasons as above.  Consider the set $\mathcal{NC}(n,p,q)\subset \mathcal{N}(n,p,q)\times \mathcal{N}(n,p,q)\times ...\times \mathcal{N}(n,p,q)\,\, (r+1 \mbox{copies})$ consisting of the $(t, t_1,...,t_r)$ such that $$(\mathcal{NF}(n,p,q)_t;\mathcal{NF}(n,p,q)_{t_1},...,\mathcal{NF}(n,p,q)_{t_r})$$ is a Nash NC-system with complexity at most $(p,q).$ By the proof of Lemme \ref{croisement normaux} the normal crossing property also can be translated into a first order formula of the theory of real closed fields. It follows that $\mathcal{NC}(n,p,q)$  is a semialgebraic set. Then we have the semialgebraic families $\mathcal{M}=\{(x,s)\in R^n\times\mathcal{NC}(n,p,q): x\in \mathcal{NF}(n,p,q)_t\},$ $\mathcal{M}_1=\{(x,s)\in R^n\times\mathcal{NC}(n,p,q): x\in \mathcal{NF}(n,p,q)_{t_1}\},$..., $\mathcal{M}_r=\{(x,s)\in R^n\times\mathcal{NC}(n,p,q): x\in \mathcal{NF}(n,p,q)_{t_r}\}.$ Then use fibers at $\alpha\in \widetilde{\mathcal{NC}(n,p,q)}$ to find a Nash submanifold $Q\subset \mathcal{NC}(n,p,q)$ such that $\alpha\in \widetilde{Q}$ and the restrictions to $Q$ of the families give a Nash NC-system. Applying Theorem \ref{isotopy} we make it trivial. By the real spectrum compactness, there are finitely many $Q's$ cover $\mathcal{NC}(n,p,q).$

Using the properties of the finiteness of Nash trivialization of the family of Nash NC-systems in $R^n$ of complexity bounded by $(p,q)$ and \cite{D}, Proposition 5.9, we get the existence of the couple of  integers $(v,w)$ such that for any Nash NC-systems $(M;N_1,...,N_r)$ and $(M';N_1',...,N_r')$ in $R^n$ of complexity bounded by $(p,q)$ which are Nash diffeomorphic, there exists a Nash diffeomorphism $\Phi:M\rightarrow M'$  whose complexity is bounded by $(v,w)$ and such that $\Phi(N_i)=N_i',$ for $i=1,...,r.$


It follows that there is a triplet of integers $(u,v,w)$ such that for each $i=1,...,u$ there is a Nash NC-system $(M_i;N_{i_1},...,N_{i_r})$ of complexity bounded by $(p,q)$ and for any Nash NC-system $(M;N_1,...,N_r)$ of complexity bounded by $(p,q)$ there is $i\in\{1,...,u\} $ and a Nash diffeomorphism $\Phi:M\rightarrow M_i$  of complexity bounded by $(v,w)$ and such that $\Phi(N_j)=N_{i_j}$ for $j=1,...,r.$ 

So fixing $n, p, q\,\,\mbox{and}\,\, r,$  there exists $(u,v,w)\in \N^3$ such that the following formula holds:

 $\Psi (n, p, q, r, u, v, w):=$
 \begin{quote}
 \virg{{\it There are $u$ Nash NC-systems $(M_i;N_{i_1},...,N_{i_r})$, $i=1,...,u,$  with complexity bounded by $(p,q)$ such that for every Nash NC-system 
$(M; N_1,...,N_r)$ in  $\R^n$ of complexity at most $(p, q)$ there is
 $i\in\{1,...,u\} $ and a Nash diffeomorphism $\Phi:M\rightarrow M_i$  of complexity bounded by $(v,w)$ and such that $\Phi(N_j)=N_{i_j}$ for $j=1,...,r.$}}
\end{quote}
Of course the formula $\Psi (n, p, q, r, u, v, w)$ can be translated into a sentence of the first order theory of real closed fields. So
there is an algorithm with input $n, p, q, r, u, v, w$  and output the sentence $\Psi (n, p, q, r, u, v, w).$ Here is an algorithm to produce $u, v$ and $w$ from $(n,p,q,r)$:
\begin{itemize}
\item Set $u:=1, v:=1, w:=1$.
\item While $\Psi (n, p, q, r, u, v, w)$ is false, do $u:=u+1, v:=v+1, w:=w+1.$
\end{itemize}
So the decidability of the  first order theory of real-closed  fields, due to Tarski,  implies that the function $$(n,p,q,r)\mapsto (u(n,p,q,r), v(n,p,q,r), w(n,p,q,r)),$$  is bounded by a recursive function. This achieves the proof.
\end{proof}

\section{Desingularization on the Nash category over an arbitrary real closed field}

In [BM1] and [BM2], the authors prove that the desingularization theorem holds true for the objects $X$ belonging to the category $\mathcal{A}$  of locally ringed spaces over a field $k$ of characteristic zero which satisfies the four properties that we shall directly verify for the category $\textbf{Nash}_R$ which interest us in this paper.
\begin{enumerate}[(i)]
\item
Every locally ringed space $(\widetilde{X}, \mathcal{N}_{\widetilde{X}})$ in the category  $\textbf{Nash}_R$ is a closed subspace $(Z(\mathcal{I}), \mathcal{N}_{\widetilde{M}}/\mathcal{I})$ of a Nash manifold  $(\widetilde{M},\mathcal{N}_{\widetilde{M}})$ which is  a smooth space (all its local rings are regular local rings). 
\item
For $(\widetilde{M},\mathcal{N}_{\widetilde{M}})$ a smooth space in the category $\textbf{Nash}_R,$ there is a  neighbourhood basis  given by (the support of) Nash coordinate charts $\widetilde{U}$. That is, there is  coordinates system  $(x_1,...,x_m)$ on $\widetilde{U}$consisting of  Nash   functions on $\widetilde{U}.$ Indeed each $x_i \in \mathcal{N}_{\widetilde{M}}(\widetilde{U})$ and the partial derivatives 
$\frac{\partial^{|\beta|}}{\partial x^{\beta}}= \frac{\partial^{\beta_1+...+\beta_m}}{\partial x_1^{\beta_1}...x_m^{\beta_m}}$ make sense  as transformations $\mathcal{N}_{\widetilde{M}}(\widetilde{U})\rightarrow \mathcal{N}_{\widetilde{M}}(\widetilde{U})).$\\ 
Furthermore, for each $\alpha\in \widetilde{U},$ one can verify that the Taylor homomorphism $T_{\alpha}$ at 
${\alpha}$ commutes with differentiation: $T_{\alpha} \circ (\frac{\partial^{|\beta|}}{\partial x^{\beta}})=\frac{\partial^{|\beta|}}{\partial x^{\beta}}\circ T_{\alpha}$, for all $\beta \in \mathbb{N}^r.$ By Section 2, $T_{\alpha}$ induces an isomorphism   $\widehat{\mathcal{N}}_{\alpha}\cong k(\alpha)[[X_1,...,X_r]]$ with $r=\mbox{dim}(M)-\mbox{dim}( \alpha).$
\item
For each $\widetilde{X}\in \textbf{Nash}_R$, $\mathcal{N}_{\widetilde{X}}$ is a coherent sheaf  of ideals of Nash functions and $\widetilde{X},$  in this category, as stated in Section 2,  is noetherian. Every decreasing sequence of closed subspaces of $\widetilde{X}$ stabilizes since $\mathcal{N}_{\widetilde{X}}(\widetilde{X})$ is a noetherian ring.
\end{enumerate}

It remains to show that $\textbf{Nash}_R$ is closed under blowings-up. It is clear that the blowing-up of a Nash manifold gives a Nash manifold. But in our case we consider only  {\it affine}  Nash manifolds, and it is not clear that the Nash manifold obtained by blowing-up (which is locally defined) is affine. For instance consider $\mathbb{R}/\mathbb{Z}$ and the Nash open sets $(0,1)$ and $(\tfrac{1}{2}, \tfrac{3}{2}).$ We define a Nash structure on $\mathbb{R}/\mathbb{Z}$ by glueing the charts $(0,1)$ and $(\tfrac{1}{2}, \tfrac{3}{2})$ in the following way: 
\begin{eqnarray*}
(0,1)\supset (\tfrac{1}{2}, 1)\stackrel{Id}{\longrightarrow} (\tfrac{1}{2}, 1)\subset (\tfrac{1}{2}, \tfrac{3}{2})\, \, \mbox{and}
\end{eqnarray*}
\begin{eqnarray*}
(0,1)\supset (0,\tfrac{1}{2})&\stackrel{\sim}{\longrightarrow}& (1,\tfrac{3}{2})\subset (\tfrac{1}{2}, \tfrac{3}{2})\\
t&\mapsto&t+1,
\end{eqnarray*}
 Consider $\varphi\in \mathcal{N}_{\mathbb{R}/\mathbb{Z}}.$ The canonical mapping $f: \mathbb{R}\rightarrow \mathbb{R}/\mathbb{Z}$ is a Nash map. Then   
 $\varphi\circ f$ is constant because it is periodic and a periodic Nash map is constant. That implies that $\varphi$ is constant. Then $\mathbb{R}/\mathbb{Z}$ is non-affine.

 In the following subsection we prove that the blowing-up of an affine Nash  manifold gives an affine Nash manifold. 

 \vspace{0.35cm}
 
 \textbf{Global Definition of Blowing-up in Nash category}
 \newline   
 For the sake of simplicity, although we work on the category $\textbf{Nash}_R,$  we  shall write here the objects without the tilde symbol  $\, \widetilde{}$ . So, let $C\subset M$ be a closed Nash submanifold of an affine Nash manifold $M$ of codimension $c$. Let $\mathcal{I}_{C}$ be the  sheaf  of ideals  of germs of Nash functions vanishing  on $C$. It is known that the sheaf $\mathcal{I}_C$ is a finite sheaf (cf.\cite{S}). It follows from \cite{CRS}  that there  are finitely many $f_1,...,f_r\in\mathcal{N}_{M}(M)$ such that $\mathcal{I}_{C}= (f_1,...,f_r)\mathcal{N}_M.$
Consider the  semialgebraic subset $M' \subset M\times \mathbb{P}^{r-1}$ defined as follows:
\begin{equation*} 
M'= \mbox{clos}(\{(x,[\xi_1,...,\xi_r])\in (M\setminus C)\times \mathbb{P}^{r-1}: f_i(x)\xi_j= f_j(x)\xi_i; i,j=1,...,r\}),
\end{equation*}
where clos defines the closure in $M\times \mathbb{P}^{r-1}$ with respect to the  usual topology.  Let us show that $M'$ is a blowing-up of $M$ with center $C.$ Consider the projection  $\Pi: M\times \mathbb{P}^{r-1}\rightarrow M,$ then the restriction 
$\Pi|_{M'}: M'\rightarrow M.$ It follows from the construction of $M'$ that $\Pi|_{\Pi^{-1}(M\setminus C)}: \Pi^{-1}(M\setminus C)\rightarrow M\setminus C$ is a Nash diffeomorphism.

Now for any $\alpha_0\in C$, there is a semialgebraic  open neighbourhood $U$ of $\alpha_0$ in $M$ such that $c$ of the $f_i$ belong to a coordinates system over $U$, say $f_1,...,f_c, x_{c+1},...,x_m$ for simplicity, such that for $c<j\leq r$ there are $h_{j,k}\in \mathcal{N}_{M}(U)$ with $f_j= h_{j,1}f_1+...+h_{j,c}f_c$ and $k=1,...,c.$  By the usual blowing-up of $U$ with center $U\cap C=\{f_1=...=f_c=0\}$, we get: $$U'=\{(x, [\xi_1,...,\xi_c])\in U\times \mathbb{P}^{c-1}: f_i(x)\xi_j= f_j(x)\xi_i,   1\leq i,j\leq c\}.$$
It is clear that $U'$ is a Nash  submanifold of  $U\times \mathbb{P}^{c-1}$. Let  us show   that $U'$ is  Nash diffeomorphic to $M'\cap \Pi^{-1}(U).$  Let us  consider the embedding  $\Phi: U\times \mathbb{P}^{c-1}\rightarrow U\times \mathbb{P}^{r-1}$ defined by:
$$(x,[\xi_1,...,\xi_c])\mapsto (x, [\xi_1,...,\xi_c,\sum_{i=1}^c h_{{c+1},i}(x)\xi_i,...,\sum_{i=1}^c h_{r,i}(x)\xi_i]).$$
We \it{claim} \rm that:   $\Phi(U')= M'\cap \Pi^{-1}(U).$ 
It follows that $M'$ is the blowing-up of $M$ with center $C$ and $M'$ is an affine Nash manifold.
\medskip

\noindent {\it Proof of claim:}  
It is clear that  $\Phi(U'\setminus (C\times \mathbb{P}^{c-1}))=M'\cap ((U\setminus C)\times \mathbb{P}^{r-1}).$ Since $(U'\setminus C)\times \mathbb{P}^{c-1}$ is dense in $U'$
 and $\Phi$ is a homeomorphism onto its image, we deduce that $M'\cap ((U\setminus C)\times \mathbb{P}^{r-1})$ is dense in $\Phi(U').$ Hence $M'\cap (U\times  \mathbb{P}^{r-1})\supset M'\cap ((U\setminus C)\times \mathbb{P}^{r-1}),$ is dense in $\Phi(U')$ too. But
$M'\cap(U\times \mathbb{P}^{r-1}) $ is a closed submanifold of $U\times  \mathbb{P}^{r-1},$ hence closed in $\Phi(U')\subset U\times  \mathbb{P}^{r-1}.$
Consequently, $M'\cap (U\times  \mathbb{P}^{r-1})=\Phi(U')$ This completes the proof.
\medskip 
	
Which verifies that $\textbf{Nash}_R$ is closed under blowings-up.We conclude that the desingularization theorem is valid in the category 
$\textbf{Nash}_R.$

\section{Nash trivial simultaneous resolution}

In this section, we prove the global case of the result of Koike ([K1], Theorem I, p.317) which is stated in  this paper by \textbf{Theorem (I)}. In other  words, we prove the global case of the finiteness theorem of Nash trivial simultaneous resolution. 

Let us first  prove  the following technical lemma:
\begin{lem}\label{Centre sur la fibre}
Let $\alpha \in \widetilde{R^s}$, and let $N_{\alpha}$ be a Nash submanifold of  $k(\alpha)^m.$ Let 
$\sigma_{\alpha}:N_{1\alpha}\rightarrow N_{\alpha}$ be the  blowing-up of $N_{\alpha}$ with center $C_{\alpha}$ a Nash submanifold. Then there exist a Nash submanifold $Q\subset R^s$, with $\alpha \in \widetilde{Q}$, Nash manifolds $N_{1Q}$, $N_Q$ and $C_Q\subset N_Q\subset R^m\times Q$ and  a Nash map $\sigma_{Q}:N_{1Q}\rightarrow N_ Q$  the Blowing-up of $N_ Q$ with center $C_Q$ such that and the projections $N_Q\rightarrow Q$ and $C_Q\rightarrow Q$ are Nash onto submersions.
\end{lem}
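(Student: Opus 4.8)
The plan is to \emph{spread out} the blowing-up defined over $k(\alpha)$ into a semialgebraic family over a small Nash submanifold $Q\subset R^s$ through $\alpha$, using the transfer Theorems~\ref{family1}, \ref{family} and \ref{submersion} together with the explicit global model for Nash blowing-ups built in Section~3. First I would spread out the data: the semialgebraic set $N_\alpha\subset k(\alpha)^m$ is the fiber at $\alpha$ of a semialgebraic family $N\subset R^m\times S$ with $\alpha\in\widetilde S$, and, enlarging $S$, the Nash submanifold $C_\alpha\subset N_\alpha$ is the fiber at $\alpha$ of a semialgebraic family $C\subset R^m\times S$. Applying Theorem~\ref{submersion} first to $N$ and then to $C$, and using that $\alpha$ has a neighbourhood basis in $\widetilde{R^s}$ consisting of the $\widetilde Q$ for $Q$ a Nash submanifold Nash diffeomorphic to an open simplex (\cite{CS}), I obtain a single Nash submanifold $Q\subset R^s$ with $\alpha\in\widetilde Q$ such that $N_Q$ and $C_Q$ are Nash submanifolds of $R^m\times Q$ and both $N_Q\to Q$ and $C_Q\to Q$ are Nash submersions; since the relations $C_\alpha\subset N_\alpha$ and ``$C_\alpha$ closed in $N_\alpha$'' are first order, a further shrinking of $Q$ makes $C_Q$ a closed Nash submanifold of $N_Q$.

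Next I would spread out a finite set of global generators of the center's ideal. Because $N_\alpha$ is an affine Nash manifold over $k(\alpha)$, the finiteness results invoked in Section~3 (finiteness of the ideal sheaf of a closed Nash submanifold together with \cite{CRS}) provide $f_1,\dots,f_r\in\mathcal{N}(N_\alpha)$ with $\mathcal{I}_{C_\alpha}=(f_1,\dots,f_r)\mathcal{N}$; recall also the local normal form of Section~3, namely that near each point of $C_\alpha$ a number $c=\mathrm{codim}\,C_\alpha$ of the $f_i$ are part of a Nash coordinate system and the others are their $\mathcal{N}$-linear combinations. By Theorem~\ref{family}, applied to the $f_i$ and to the coefficients of those linear combinations (passing to a common smaller $Q$), I get a semialgebraic family of Nash functions $f_{i,Q}\colon N_Q\to R$ with $(f_{i,Q})_\alpha=f_i$, whose fibers stay Nash by Theorem~\ref{family1}. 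The equality $C_\alpha=Z(f_1,\dots,f_r)$ and the local normal form are first order and hold at $\alpha$, so after a last shrinking of $Q$ they hold over $Q$; hence $C_Q=\{f_{1,Q}=\dots=f_{r,Q}=0\}$ and the $f_{i,Q}$ generate the ideal sheaf of $C_Q$ in $\mathcal{N}_{N_Q}$.

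Then I would build the family blowing-up exactly as in Section~3: set
$$N_{1Q}=\mathrm{clos}\bigl(\{(x,[\xi_1:\dots:\xi_r])\in (N_Q\setminus C_Q)\times\mathbb{P}^{r-1}:\ f_{i,Q}(x)\xi_j=f_{j,Q}(x)\xi_i,\ 1\le i,j\le r\}\bigr),$$
a semialgebraic family, viewed inside an affine space via a fixed Nash embedding of $R^m\times Q\times\mathbb{P}^{r-1}$, and let $\sigma_Q\colon N_{1Q}\to N_Q$ be the projection. Since topological closure is first order definable, the fiber $(N_{1Q})_\alpha$ is obtained by the same formula over $k(\alpha)$, which by Section~3 is the blowing-up of $N_\alpha$ along $C_\alpha$, i.e. $N_{1\alpha}$; hence $(N_{1Q})_\alpha$ is a smooth affine Nash manifold, $(\sigma_Q)_\alpha$ is proper and restricts to a Nash diffeomorphism over $N_\alpha\setminus C_\alpha$. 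All these are first order properties of the parameter (smoothness and bounded complexity of a Nash manifold in the sense of \cite{R} and \cite{D}, properness of $\sigma_Q$, and $\sigma_Q$ being a diffeomorphism away from $C_Q$); together with the local normal form transferred above they force, after one final shrinking of $Q$, that $N_{1Q}$ is an affine Nash manifold and $\sigma_Q$ is the blowing-up of $N_Q$ with center $C_Q$. Combined with the first paragraph, this yields all the asserted properties.

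I expect this last step to be the main obstacle: one must check that the explicit closure formula for a blowing-up is compatible with passing to the fiber at $\alpha$ and, conversely, that the genuinely global features of that fiber --- smoothness and above all \emph{affineness} of $N_{1Q}$, and the fact that $\sigma_Q$ is the blowing-up along $C_Q$ --- descend to a whole Nash submanifold through $\alpha$ by first order transfer. This is exactly where the Section~3 verification that $\textbf{Nash}_R$ is closed under blowing-ups (and that blowing up an \emph{affine} Nash manifold yields an affine one) is indispensable: it secures the good behaviour at the single point $\alpha$, and the transfer principle then propagates it to a neighbourhood $\widetilde Q$.
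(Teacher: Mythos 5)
Your proposal is correct and follows essentially the same route as the paper: the paper's proof also writes the graph of $\sigma_{\alpha}$ and the center $C_{\alpha}$ as first-order definable data, invokes the global closure-formula construction of the blowing-up from Section~3 to say that ``being the blowing-up with center $C_{\alpha}$'' is first-order expressible, transfers this to a Nash submanifold $Q$ with $\alpha\in\widetilde Q$, and finishes with Theorem~\ref{submersion}. You have merely made explicit (via the spread-out generators $f_{i,Q}$ and the closure formula in the family) the step the paper leaves as a reference to Section~3.
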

\begin{proof}
Since $\sigma_{\alpha}$ is a Nash map, its graph $\Gamma_{\sigma_{\alpha}}$ is a semialgebraic set. Then we may write it as follows
$$\Gamma_{\sigma_{\alpha}}=\{(x,y)\in N_{1\alpha}\times N_{\alpha}: \Phi(x,y)\}$$ where $\Phi(x,y)$ is a first order formula in the theory of real closed fields with coefficients $a=(a_1,...,a_n)\in k(\alpha)^n$ for some $n\in \mathbb{N}.$ We may also write $C_{\alpha}=\{x\in N_{\alpha}: \Psi(x)\}$ where $\Psi(x)$ is a first order formula in the theory of real closed fields with coefficients $b=(b_1,...,b_c)\in k(\alpha)^c$ for some $c\in \mathbb{N}.$

The fact that the Nash map $\sigma_{\alpha}:N_{1\alpha}\rightarrow N_{\alpha}$ is the blowing-up of 
$ N_{\alpha}$ with center $C_{\alpha}$ can be expressed into a first order formula of theory of  real closed fields (see the construction of global definition of Blowings-up in Nash category in Section 2). It follows that  there exist a Nash submanifold $Q\subset R^s$ with $\alpha\in \widetilde{Q},$ a Nash submanifold $C_Q\subset N_Q$  and $\sigma_Q: N_{1Q}\rightarrow N_Q$  is the blowing-up of $N_Q$ with smooth center $C_Q.$ Applying also Theorem \ref{submersion}, shrinking $Q$ if necessary, the projection $N_{1Q}\rightarrow Q,$  $N_{Q}\rightarrow Q,$ and $C_Q\rightarrow Q$ are Nash submersions. This ends the proof.
\end{proof}

Now we will prove the \textbf{Theorem (I)}.
Let $N$ be an affine  Nash manifold  and $J$ a semialgebraic  subset of $\mathbb{R}^s$. Let $F: N\times J\rightarrow \mathbb{R}^k$ be a semialgebraic family of Nash maps $f_t: N \rightarrow \mathbb{R}^k$ ($t\in J$). 

\begin{proof}[Proof of \textbf{Theorem (I)}]
As seen in Section 3, Nash desingularization  is valid over any real closed field. Let $\alpha \in \tilde{J}.$ By the Theorem \ref{family1}, $f_{\alpha}$ is a Nash map. It follows that  the fiber $f_{\alpha}^{-1}(0)\subset N_{\alpha}$ is  a Nash set. By the Nash Desingularization Theorem, there exists a Nash resolution of $f_{\alpha}^{-1}(0)$ in $N_{\alpha}$, $\Pi_{\alpha}:  \mathcal{M}_{\alpha} \rightarrow  N_{\alpha}$. The properties that $\Pi_{\alpha}$ must  satisfy to be a Nash resolution  can be translated in a first order formula of the theory of real closed fields (a consequence of the construction of global definition of blowing-up in Nash category in Section 3).   Consider the Nash divisors $\mathcal{D}_{1\alpha},...,\mathcal{D}_{k\alpha}$  and  $\mathcal{V}_{\alpha}$ the strict transform of $V_{\alpha}=f_{\alpha}^{-1}(0)$ with respect to $\Pi_{\alpha}$. By the definition of $\Pi_{\alpha}$, we have that,  $\mathcal{D}_{1\alpha},...,\mathcal{D}_{k\alpha}$ and $\mathcal{V}_{\alpha}$ have only normal crossings.

So by the Theorem \ref{family}, Theorem \ref{submersion}, Lemma \ref{Centre sur la fibre} and Lemma \ref{croisement normaux} these data are fibers of suitable families, and there exists a Nash submanifold $Q\subset J$, which can be supposed Nash diffeomorphic to an open simplex  in an euclidean space, and a Nash map $F_Q:N\times Q \rightarrow \mathbb{R}^k,$ such that $\alpha\in \widetilde{Q}$ and we have a commutative  diagram of Nash mapping 

$$\begin{CD}
\mathcal{M}_{Q} @>>> Q\\
@V{\Pi}VV @|\\
N\times Q @>p>> Q
\end{CD},$$ 
where  $p\circ\Pi $ is a  Nash submersion and the map $\Pi$ is a Nash resolution of 
$V= F_{Q}^{-1}(0)$ in $N\times Q$ and the strict transform $\mathcal{V}$  of $V$, and the exceptional divisors $\mathcal{D}_1,...,\mathcal{D}_k$ have only normal crossings. Furthermore  
\begin{itemize}
   \item $ p\circ \Pi|_{\mathcal{V}}: \mathcal{V}\rightarrow Q,$ 
   \item  $p\circ \Pi|_{\mathcal{D}_{j_1}\cap...\cap \mathcal{D}_{j_s}}: \mathcal{D}_{j_1}\cap...\cap \mathcal{D}_{j_s} \rightarrow Q,$
    \item $p\circ \Pi|_{\mathcal{V}'\cap \mathcal{D}_{j_1}\cap...\cap \mathcal{D}_{j_s}}:\mathcal{V}\cap \mathcal{D}_{j_1}\cap...\cap  \mathcal{D}_{j_s}\rightarrow Q $   $(1\leq j_1<...<j_s\leq k)$
  \end{itemize}
are Nash submersions.

Applying Theorem \ref{isotopy} to $p\circ \Pi$, there is a finite partition of $Q$ into Nash open simplices $Q_j,$ $j=1,...,b,$ and for any $j,$ there are a point $t_j\in Q_j$ and  a Nash diffeomorphism $\Phi_j: \mathcal{M}_{Q_j}\rightarrow \mathcal{M}_{t_j}\times Q_j$ such that $\Phi_j(\mathcal{V}_j)=\mathcal{V}_{t_j}\times Q_j$, $\Phi_j(\mathcal{D}_i)=\mathcal{D}_{it_j}\times Q_j$, $i=1,..., r$  and the following diagram is commutative:
$$\begin{CD}
\mathcal{M}_{Q_i} @>{\Phi_j}>>  \mathcal{M}_{t_j}\times Q_j\\
@V{\Pi_j}VV @VVpV\\
N\times Q_j @>p>> Q_j
\end{CD},$$ where  $\mathcal{M}_{t_j}= (p\circ \Pi)^{-1}(t_j),$ $\mathcal{V}_{t_j}= \mathcal{V}_j\cap (p\circ \Pi)^{-1}(t_j)$ and $\mathcal{D}_{it_j}=\mathcal{D}_i\cap (p\circ \Pi)^{-1}(t_j)$ with $t_j\in Q_j.$ Now, we choose $j_0\in\{1,...,b\}$ such that $\alpha\in \widetilde{Q_{j_0}}$. Since $\Pi_{\alpha}$ is a Nash resolution of $V_{\alpha}$, shrinking $Q_{j_0}$ as necessarily to preserve the resolution property, it follows that $\Pi_{j_0}$ gives a Nash trivial simultaneous resolution of $V_j$ in $N\times Q_j$ over $Q_j.$  Let us write it $\widetilde{Q^{\alpha}}.$

Since $\widetilde{J}= \bigcup_{\alpha \in \widetilde{J}} \widetilde{Q^{\alpha}}$ with $Q^{\alpha}\subset J$ a Nash submanifold diffeomorphic to an open simplex in an Euclidean space. Over  each $\widetilde{Q^{\alpha}}$ there is a Nash trivial simultaneous resolution of $F_{Q^{\alpha}}^{-1}(0)$ in $N\times Q^{\alpha}$. Since $\widetilde{J}$ is compact there is a finite subcovering $\widetilde{J}= \bigcup_{i=1}^{p} \widetilde{Q^{\alpha_i}}$. Set $J_i=Q^{\alpha_i}$, and this ends the proof.
\end{proof}

Bounding complexities of Nash subsets and applying \textbf{Theorem (I)}, we get that there are finitely and effectively many models of Nash subsets with theirs  Nash resolutions.

\begin{teo}
Given the integers $n, p\,\,\mbox{and}\,\ q $, there  exists a triplet of integers $(u,v,v)$ such that for each $i=1,...,u$ there are a Nash subset $V_i\subset \mathbb{R}^n$ of complexity bounded by $(p,q)$ and  a Nash resolution $\Pi_i:\mathcal{M}_i\rightarrow \mathbb{R}^n$ of $V_i$; and for any Nash subset $V\subset \mathbb{R}^n$ of complexity bounded by $(p,q)$ there are a Nash resolution $\Pi:\mathcal{M}\rightarrow \mathbb{R}^n$ of $V$, $i\in\{1,...,u\} $ and a Nash diffeomorphism $\Phi:\mathcal{M}\rightarrow \mathcal{M}_i,$  of complexity bounded by $(v,w),$ which sends  strict transform  to strict transform  and exceptional  divisors to exceptional divisors.

Moreover,  $u,v\,\,\mbox{and}\,\,w$ are recursive functions of $n, p\,\, \mbox{and}\,\, q.$
\end{teo}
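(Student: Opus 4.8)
The plan is to mimic exactly the two-step strategy already used for Theorem \ref{sdf} combined with the proof of \textbf{Theorem (I)}. First I would parametrize: fixing $n,p,q$, the data ``a Nash subset $V\subset\mathbb{R}^n$ of complexity bounded by $(p,q)$, together with a Nash resolution $\Pi\colon M\to\mathbb{R}^n$ of $V$'' form a semialgebraic family over a parameter space $B\subset\mathbb{R}^N$ which is defined by a first order formula of the theory of real closed fields with coefficients in $\mathbb{Z}$. Here one must be a little careful: the resolution $\Pi$ is a composite of a bounded number of blowing-ups with smooth centers; by the desingularization theorem over $k(\alpha)$ (valid by Sect.3) applied fibrewise, and by the effective bounds on the desingularization algorithm (the centers chosen by the Bierstone--Milman algorithm have complexity controlled by $(n,p,q)$), the number of blowing-ups and the complexity of each center is bounded by a recursive function of $n,p,q$. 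Thus one may fix a bound on the ``length'' and complexity of $\Pi$, so that the whole package $(V,M,\Pi,V',D_1,\dots,D_r)$ lives in a single semialgebraic family; the defining conditions (being a Nash subset, $\Pi$ being a composite of smooth blowing-ups, $V'$ being the strict transform, and $V',D_1,\dots,D_r$ having only normal crossings) are first order, using the global description of blowing-up from Sect.3 and the proof of Lemma \ref{croisement normaux}.

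Next I would invoke \cite{CS} Corollary 2.3 to cut the parameter space $B$ into finitely many Nash submanifolds Nash diffeomorphic to open simplices, and on each piece apply Theorem \ref{isotopy} (Nash isotopy) to the projection $p\circ\Pi\colon \mathcal{M}_Q\to Q$ of the total space onto the parameter piece $Q$ — exactly as in the proof of \textbf{Theorem (I)}, after shrinking $Q$ using Theorem \ref{submersion}, Lemma \ref{Centre sur la fibre} and Lemma \ref{croisement normaux} so that all the relevant restrictions of $p\circ\Pi$ to $V'$, the $D_i$'s and their intersections are Nash submersions. The isotopy theorem yields, on each piece, a Nash trivialization $\Phi_j\colon \mathcal{M}_{Q_j}\to\mathcal{M}_{t_j}\times Q_j$ carrying the strict transform to $V'_{t_j}\times Q_j$ and each divisor to $D_{i,t_j}\times Q_j$. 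Taking the fibers over the base points $t_j$ gives the finitely many model pairs $(M_i,V_i)$ with their resolutions $\Pi_i$, and the trivialization restricted to a single fibre furnishes a Nash diffeomorphism between any $(M,\Pi)$ in that piece and the corresponding model sending strict transform to strict transform and exceptional divisors to exceptional divisors; the complexity of the graph of this diffeomorphism is bounded by $(v,w)$ by the same argument (Theorem \ref{isotopy} produces diffeomorphisms whose graphs have complexity controlled along the family, as used in Theorem \ref{sdf}).

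Finally, for the recursiveness of $(n,p,q)\mapsto(u,v,w)$ I would repeat the decidability argument of Theorem \ref{sdf}: the statement
\begin{quote}
\virg{\it there exist $u$ Nash subsets $V_i\subset\mathbb{R}^n$ of complexity bounded by $(p,q)$ with Nash resolutions $\Pi_i\colon M_i\to\mathbb{R}^n$ such that every Nash subset $V\subset\mathbb{R}^n$ of complexity bounded by $(p,q)$ with a Nash resolution $\Pi\colon M\to\mathbb{R}^n$ is Nash diffeomorphic, by a diffeomorphism of graph complexity bounded by $(v,w)$ sending strict transform to strict transform and exceptional divisors to exceptional divisors, to some $(M_i,\Pi_i)$\/}
\end{quote}
is, for fixed integers $n,p,q,u,v,w$, a closed sentence $\Psi(n,p,q,u,v,w)$ of the theory of real closed fields (the bounded ``length'' of a resolution makes the existential/universal quantifiers range over finitely many coordinates). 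An algorithm produces $\Psi$ from its numerical inputs, and decidability of the first order theory of real closed fields then lets us search for the least $(u,v,w)$ making $\Psi$ true; this search is effective, so the function $(n,p,q)\mapsto(u,v,w)$ is bounded by a recursive function.

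The main obstacle is the first step: one must be sure that ``a Nash resolution of a Nash set of complexity $\le(p,q)$'' can be captured inside a \emph{single} semialgebraic family, i.e. that the number of blowing-ups and the complexity of the successive centers are bounded in terms of $(n,p,q)$ only. This requires an effective version of the desingularization theorem over real closed fields — following Bierstone--Milman one knows the resolution algorithm is canonical and the invariants controlling it are definable, so the bounds are recursive; but this effectivity statement, while folklore, is the delicate point that the argument leans on. Once the family is set up, everything else is a routine repetition of the proofs of Theorem \ref{sdf} and \textbf{Theorem (I)}.
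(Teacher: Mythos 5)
Your outline matches what the paper actually does: the paper gives no written proof of this theorem beyond the sentence that it follows ``using the same technique'' as Theorem \ref{sdf} and \textbf{Theorem (I)}, and your three steps (parametrize the pairs $(V,\Pi)$ by a semialgebraic family, apply \cite{CS} Corollary 2.3 together with Theorem \ref{isotopy}, then invoke decidability of the theory of real closed fields) are precisely that technique. The one substantive divergence is in how the parametrizing family is obtained. You propose to bound a priori the number of blowing-ups and the complexity of the centers via an effective Bierstone--Milman algorithm over real closed fields, and you rightly flag this as the delicate, folklore input your argument leans on. The paper's scheme does not require it: as in Theorem \ref{sdf}, the \emph{existence} of some triple $(u,v,w)$ is obtained non-effectively --- here from the proof of \textbf{Theorem (I)}, i.e.\ fibrewise desingularization at points of the real spectrum, spreading out over a semialgebraic neighbourhood, and compactness of $\widetilde{J}$, which produces finitely many models together with finitely many resolutions and hence, a posteriori, a finite bound on their length and complexity. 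Recursiveness then comes solely from decidability: for each candidate $(u,v,w)$ the sentence $\Psi(n,p,q,u,v,w)$ is decidable, and the search over triples terminates because a true instance is guaranteed to exist. So the appeal to effective desingularization can be deleted and replaced by the compactness argument; this is cheaper and is what the paper's ``same technique'' refers to.

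That said, both your version and the paper's face the same unaddressed issue: the theorem as stated quantifies over an \emph{arbitrary} Nash resolution $\Pi$ of $V$, and without a bound on the number of blowing-ups and on the complexity of the successive centers the pairs $(V,\Pi)$ do not form a single semialgebraic family, nor can finitely many models $M_i$ suffice (one can always perform additional superfluous blowing-ups, changing the topology of $M$). The statement only makes sense if one restricts to resolutions of complexity bounded in terms of $(n,p,q)$ --- either those produced in the proof of \textbf{Theorem (I)} or an explicit bound built into $\Psi$. Your set-up implicitly makes this restriction; it should be made explicit in the statement.
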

\begin{proof}

Let us note that the family of Nash subsets of $\mathbb{R}^n$ of complexities bounded by $(p,q),$ say $\mathcal{EN}(n,p,q),$ can be parametrized by a semialgebraic set denoted by
$\mathcal{E}(n,p,q).$ It is also straightforward to show that $\mathcal{EN}(n,p,q)$ is semialgebraic. 

Now, set $F: \mathbb{R}^n\times \mathcal{E}(n,p,q)\rightarrow \mathbb{R}^k$ a semialgebraic family of Nash maps, for a suitable $k,$ such that $F_{\mathcal{E}(n,p,q)}^{-1}(0)=\mathcal{EN}(n,p,q).$  Applying \textbf{Theorem (I)} to $F,$ there exists a finite partition $\mathcal{E}(n,p,q)=\mathcal{E}_1\cup ...\cup \mathcal{E}_u$ which satisfies the following conditions:
\begin{enumerate}
\item    Each $\mathcal{E}_i$ is Nash manifold which is Nash diffeomorphic to an open  simplex in some Euclidean space, and $F_{\mathcal{E}_i}$ is  Nash.
\item  For each $i$, $F_{\mathcal{E}_i}^{-1}(0)$ admits a Nash trivial simultaneous resolution along $\mathcal{E}_i.$
 \end{enumerate}
For each $i\in\{1,...,u\},$ take a point $t_i\in \mathcal{E}_i$ and set $V_i=F_{t_i}^{-1}(0)$ and $\Pi_i:\mathcal{M}_{t_i}\rightarrow \mathbb{R}^n$ a Nash resolution of $V_i$ which is a fiber at $t_i$ of the Nash simultaneous resolution of the family $F_{\mathcal{E}_i}^{-1}(0).$ So for any Nash subset $V\subset R^n$ of complexity bounded by $(p,q)$, by (2) there are $i\in \{1,...,u\}$ and $t\in \mathcal{E}_i$ such that $V= F_{t}^{-1}(0)$ and a Nash resolution  $\Pi_{t}:\mathcal{M}_{t}\rightarrow \mathbb{R}^n$ of $V$ (a fiber at $t$ of the Nash simultaneous resolution of the family $F_{\mathcal{E}_i}^{-1}(0)).$ Then by (2), one can see that $\mathcal{M}_{t}$ is Nash diffeomorphic to $\mathcal{M}_{t_i}.$ Now applying \cite{D}, Proposition 5.9, we get the existence of the couple of  integers $(v,w)$  and a needed Nash diffeomorphism $\Phi_i$ whose complexity is bounded by $(v,w)$. 

For the effectiveness  of the function $(n,p,q)\rightarrow (u(n,p,q),v(n,p,q), w(n,p,q))$, use the technique in the proof of Theorem \ref{sdf}.
Which ends the proof.
\end{proof}

\section{Finiteness Theorem on Blow-Nash Triviality}

Now, we can prove our second result. Let $N$ be an affine Nash manifold  and $J$ a semialgebraic  subset of $\mathbb{R}^s$. Let $F: N\times J\rightarrow \mathbb{R}^k$ be semialgebraic family of Nash maps $f_t: N \rightarrow \mathbb{R}^k$     such that   
for every $t\in J$ the Nash set $f_t^{-1}(0)$  has only isolated singularities.  

\begin{proof}[Proof of \textbf{Theorem (II)}]
Note that the singular set $S(f_t^{-1}(0))$ of the Nash set $f_t^{-1}(0)$ is discret and semialgebraic, then it is finite and moreover its cardinal is uniformly bounded (as a consequence of uniform finiteness in semialgebraic families). This implies that, given $\alpha\in \widetilde{J},$ the Nash set $f_{\alpha}^{-1}(0)$ has finitely many isolated singular points, say $s_{1,\alpha},...,s_{\nu,\alpha}.$ Of course the singular set of $f_{\alpha}^{-1}(0)$ is defined  as follows $S(f_{\alpha}^{-1}(0))=\{x\in N_{\alpha}:F(x,t(\alpha))=0,\mbox{rank}(\frac{\partial}{\partial x}F(x,t(\alpha)))<k\}.$ By the Nash desingularization Theorem, there exists a Nash resolution of $f_{\alpha}^{-1}(0)$ in $N_{\alpha}$, $\Pi_{\alpha}:  \mathcal{M}_{\alpha} \rightarrow  N_{\alpha}$.  We index the Nash exceptional divisors as 
$\mathcal{D}_{i,j\alpha}$ for $i=1,...,\nu$ and $j=1,...,\mu(i)$ with the property $\Pi_{\alpha}(\mathcal{D}_{i,j\alpha})=\{s_{i,\alpha}\}$
  and  $\mathcal{V}_{\alpha}$ the strict transform of $V_{\alpha}=f_{\alpha}^{-1}(0)$ with respect to $\Pi_{\alpha}$. By the definition of $\Pi_{\alpha}$, we have that,  $\mathcal{D}_{i,j\alpha}$ for $i=1,...,\nu,\,\,j=1,...,\mu(i)$ and $\mathcal{V}_{\alpha}$ have only normal crossings.
	
So by the Theorem \ref{family}, Theorem \ref{submersion}, Lemma \ref{Centre sur la fibre} and Lemma \ref{croisement normaux} there exists a Nash submanifold $Q\subset J$, which can be supposed Nash diffeomorphic to an open simplex  in an euclidean space, such that $\alpha\in \widetilde{Q}$ and the following diagram  commutes:

$$\begin{CD}
\mathcal{M}_{Q} @>>> Q\\
@V{\Pi}VV @|\\
N\times Q @>p>> Q
\end{CD},$$ 
where  $p\circ\Pi $ is a  Nash submersion and the map $\Pi$ is a Nash resolution of 
$V= F_{Q}^{-1}(0)$ in $N\times Q$  such that  $\mathcal{V}$ (the strict transform of $V$) and  $\mathcal{D}_{i,j}$ the Nash divisors for 
$i=1,...,\nu,\,\,j=1,...,\mu(i)$ simultaneously have only normal crossings 
and  
\begin{itemize}
   \item $ p\circ \Pi|_{\mathcal{V}}: \mathcal{V}\rightarrow Q,$ 
   \item  $p\circ \Pi|_{\mathcal{D}_{i,j_1}\cap...\cap \mathcal{D}_{i,j_s}}: \mathcal{D}_{i,j_1}\cap...\cap \mathcal{D}_{i,j_s} \rightarrow Q,$
    \item $p\circ \Pi|_{\mathcal{V}\cap \mathcal{D}_{i,j_1}\cap...\cap \mathcal{D}_{i,j_s}}:\mathcal{V}\cap \mathcal{D}_{i,j_1}\cap...\cap   \mathcal{D}_{i,j_s}\rightarrow Q $ $(i=1,...,\nu$  and  $1\leq j_1<...<j_s\leq \mu(i))$
  \end{itemize}
are Nash submersions. There is also a semialgebraic set $$S(f_Q^{-1}(0))=\{(x,t)\in N\times Q;F(x,t)=0,\mbox{rank}(\frac{\partial}{\partial x}F_Q)<k,\frac{\partial}{\partial t}F_Q=0 \}$$ whose fiber at $\alpha$ is $S(f_{\alpha}^{-1}(0))$ and continuous semialgebraic maps $s_i:Q\rightarrow N$ for $i=1,...,\nu$ such that $S(f_Q^{-1}(0))=\{(s_i(t),t):i=1,...,\nu \,\, \mbox{and}\,\, t\in Q\}$ and  $F(s_i(t),t)\equiv 0$ for $i=1,...,\nu$ over $Q$ (shrinking $Q$ if necessarily to preserve the singular property of its fiber at $\alpha).$ It is clearly a singular set of $V.$ From this, we get  and $\Pi(\mathcal{D}_{i,j})\subset \Gamma_{s_i}$ for $j=1,...,\mu(i)$ where $\Gamma_{s_i}$ is the graph of the map $s_i.$ 

Now, applying Theorem \ref{isotopy} to $p\circ \Pi$, we may shrink $Q$ so that $\alpha\in \widetilde{Q}$ and  there exist $t_0\in Q$ and a Nash diffeomorphism $\Phi: \mathcal{M}_{Q}\rightarrow M_{t_0}\times Q$ such that $\Phi(\mathcal{V})=\mathcal{V}_{t_0}\times Q$, $\Phi(\mathcal{D}_{i,j})=\mathcal{D}_{i,j,t_0}\times Q$, $i=1,...,\nu,\,\,j=1,...,\mu(i)$  and the following diagram is commutative:
$$\begin{CD}
\mathcal{M}_{Q} @>{\Phi}>>  \mathcal{M}_{t_0}\times Q\\
@V{\Pi}VV @VVpV\\
N\times Q @>p>> Q
\end{CD},$$ where  $\mathcal{M}_{t_0}= (p\circ \Pi)^{-1}(t_0),$ $\mathcal{V}_{t_0}= \mathcal{V}\cap (p\circ \Pi)^{-1}(t_0)$ and $\mathcal{D}_{i,j,t_0}=\mathcal{D}_{i,j}\cap (p\circ \Pi)^{-1}(t_0).$ It follows that $\Pi$ gives a Nash trivial simultaneous resolution of $V$ in $N\times Q$ over $Q.$ 

Let us construct a semialgebraic homeomorphism  $\phi: N\times Q \rightarrow N\times Q$ such that the following diagram commutes:
$$\begin{CD}
\mathcal{M}_{Q} @>{\Pi}>> N\times Q@>p>>Q\\
@V{\Phi}VV @V{\phi}VV @|\\
\mathcal{M}_{t_0}\times Q @>{{\Pi}_{|{\mathcal{M}_{t_0}}}\times id_{Q}}>> N\times Q@>p>> Q
\end{CD}$$

 So, let us set the map $\phi$ as follows: $\phi(x,t)=(y,t)$ such that
\begin{itemize}
	\item  ${\phi}(x,t)= ({\Pi}_{|\mathcal{M}_{t_0}}\times id_{Q})\circ \Phi \circ {\Pi}^{-1}(x,t)$ for $x\notin \{s_1(t),...,s_{\nu}(t)\}$
	\item $\phi(s_i(t),t)=(s_i(t_0),t)$ for $i=1,...,\nu.$
\end{itemize}
 
Let us show that $\phi$ as constructed is a semialgebraic homeomorphism. Set $S=\{(s_i(t),t):i=1,...,\nu \,\, \mbox{and}\,\, t\in Q\}$ the singular set  of $V=F_{Q}^{-1}(0).$ Of course, ${\phi}_{|(N\times Q)\setminus S}$ is a Nash diffeomorphism as composition of Nash diffeomorphisms and ${\phi}_{|S}$ is a semialgebraic homeomorphism. So, $\phi$ is a semialgebraic map.  It remains to show that $\phi$ is continuous. Indeed, let be $(s_i(t),t)\in S.$ Consider a semialgebraic path $\gamma: [0,1]\rightarrow N\times Q$ with $\gamma([0,1))\subset (N\times Q)\setminus S$ and $\gamma(1)=(s_i(t),t).$ Consider a semialgebraic lifting $\gamma': [0,1]\rightarrow \mathcal{M}_{Q}$ of $\gamma$ in $\mathcal{M}_{Q}$ by $\Pi$ ($\gamma'$ exits because $\Pi$ is proper). It follows that $\gamma'(1)\in  \bigcup_j^{\mu(i)} {D_{i,j}}_t$ where ${D_{i,j}}_t= D_{i,j}\cap (p\circ\Pi )^{-1}(t).$  For $s\in [0,1)$, by definition of $\phi$ we get $$ \phi(\gamma(s))= ({\Pi}_{|{\mathcal{M}_{t}}}\times id_{Q})\circ \Phi \circ {\Pi}_{|(N\times Q)\setminus S}^{-1}(\gamma(s)).$$ It is also clear that 
$\phi(\gamma(1))= ({\Pi}_{|{\mathcal{M}_{t_0}}}\times id_{Q})\circ \Phi(\gamma'(1))= (s_i(t),t_0)$ since\\  
$\gamma'(1)\in  \bigcup_j^{\mu(i)} {D_{i,j}}_t.$
It follows that $\phi$ is a semialgebraic homeomorphism. Hence $(N\times Q, V)$ admits a $\Pi$-Blow-Nash trivialization  over $Q.$
 Let us write  this $Q$ by $Q^{\alpha}.$ 
Since $\widetilde{J}= \bigcup_{\alpha \in \widetilde{J}} \widetilde{Q^{\alpha}}$ with $Q^{\alpha}\subset J$ a Nash submanifold diffeomorphic to an open simplex in an Euclidean space over  which  $(N\times Q^{\alpha}, V)$ admits a $\Pi$-Blow-Nash triviality, by compactness property of $\widetilde{J}$, we get $\widetilde{J}= \bigcup_{i=1}^{p} \widetilde{Q^{\alpha_i}}$. Set $J_i=Q^{\alpha_i}$, and this completes the proof.

\end{proof}

To end this paper, we prove that there are finitely many  Blow-Nash equivalence classes of Nash sets with only isolated singularities and bounded complexities. We get also that the result is effective.
\begin{teo}
Given the integers $n, p\,\,\mbox{and}\,\ q $, there  exists a triplet of integers $(u,v,w)$ such that for each $i=1,...,u$ there is a Nash subset $V_i$ of $\mathbb{R}^n$ of complexity bounded by $(p,q)$ having only isolated singularities and  a Nash resolution $\Pi_i:\mathcal{M}_i\rightarrow \mathbb{R}^n$ of $V_i$; and for any Nash subset $V$ of $\mathbb{R}^n$ of complexity bounded by $(p,q)$ having only isolated singularities  there are $i\in\{1,...,u\} ,$ a Nash resolution $\Pi:\mathcal{M}\rightarrow \mathbb{R}^n$ of $V,$ a Nash diffeomorphism $\Phi:\mathcal{M}\rightarrow \mathcal{M}_i$ and a semialgebraic homeomorphism $\phi:\mathbb{R}^n\rightarrow \mathbb{R}^n$ with complexities bounded by $(v,w)$ and making the following diagram commutative \hspace{0.12cm}
$\begin{CD}
M @>{\Phi}>> M_i\\
@V{\Pi}VV @V{\Pi_i}VV\\
\mathbb{R}^n@>{{\phi}}>> \mathbb{R}^n
\end{CD}$ and $\Phi(\mathcal{V})=\mathcal{V}_i, \Phi(\mathcal{D}_{j})=\mathcal{D}_{i_j},\,\,j\in \{1,...,\mu(i)\},\,\, \phi(V)=V_i$ 
where $\mathcal{V}$ is the strict transform of $V,$  $\mathcal{D}_j$'s are the exceptional divisors of $V$, $\mathcal{V}_i$ is the strict transform of $V_i$ and $\mathcal{D}_{i_j}$'s are the exceptional divisors of $V_i.$

Moreover,  $u,v\,\,\mbox{and}\,\,w$ are bounded by recursive functions of $n, p\,\, \mbox{and}\,\, q.$
\end{teo}

\begin{proof}
First, we parametrize the semialgebraic family of Nash subsets of $\mathbb{R}^n$ of complexities at most $(p,q)$ having only isolated singularities by a  semialgebraic set. Then using the same technique of the proof of  Theorem \ref{sdf} and \textbf{Theorem (II)}, we get the result.
\end{proof}

\begin{center}
 DEMDAH KARTOUE MADY\\
    D\'epartement de Math\'ematiques\\
   Faculté des Sciences Exactes et Appliqu\'ees\\
    Universit\'e de Ndjamena\\
    BP:1027, 1 Route de Farcha\\
    email:\texttt{kartoue@hotmail.com}
\end{center}

\end{document}